\documentclass[11pt,a4paper]{article}

\usepackage{soul}
\usepackage{tikz}
\usepackage{amsthm}
\usepackage{amsmath}
\usepackage{amssymb}
\usepackage{mathrsfs}

\usepackage{geometry}
\usepackage{graphicx}
\usepackage{amsfonts}
\usepackage{epstopdf}
\usepackage{placeins}
\usepackage{enumerate}
\usepackage{enumitem}
\usepackage{subcaption}
\usepackage{microtype}
\usepackage[hidelinks]{hyperref} % avoid too many boxes with colour
\usepackage[normalem]{ulem} % enables \sout, avoids changing \emph 

\usetikzlibrary{calc,arrows.meta, positioning}

\numberwithin{equation}{section}

\allowdisplaybreaks

\newcommand{\esup}{\operatorname*{ess\,sup}}
\newcommand{\einf}{\operatorname*{ess\,inf}}
\newcommand{\eosc}{\operatorname*{ess\,osc}}

\newcommand{\Rmnum}[1]{\uppercase\expandafter{\romannumeral#1}} % Uppercase roman number

\def\Xint#1{\mathchoice
{\XXint\displaystyle\textstyle{#1}}%
{\XXint\textstyle\scriptstyle{#1}}%
{\XXint\scriptstyle\scriptscriptstyle{#1}}%
{\XXint\scriptscriptstyle\scriptscriptstyle{#1}}%
\!\int}
\def\XXint#1#2#3{{\setbox0=\hbox{$#1{#2#3}{\int}$ }
\vcenter{\hbox{$#2#3$ }}\kern-.6\wd0}}

\def\dashint{\Xint-}

\theoremstyle{plain}
\newtheorem{theorem}{Theorem}[section]
\newtheorem{proposition}[theorem]{Proposition}
\newtheorem{lemma}[theorem]{Lemma}
\newtheorem{corollary}[theorem]{Corollary}

\theoremstyle{definition}
\newtheorem{remark}[theorem]{Remark}

\renewcommand{\thefootnote}{}

\AtEndDocument{
\bigskip{\footnotesize%
   \textsc{Department of Mathematics, Aarhus University, 8000 Aarhus C, Denmark} \par
  \textit{E-mail address}: \texttt{yangmengqh@gmail.com}\par
  \textit{E-mail address}: \texttt{yang@math.au.dk}
}}

\begin{document}

\title{\normalsize{Elliptic Harnack inequality and Poincar\'e inequality for $p$-energies on metric measure spaces}}
\author{Meng Yang}
\date{}

\maketitle

\abstract{For $p>1$ and a $p$-energy on a volume doubling metric measure space, we prove the Poincar\'e inequality under the elliptic Harnack inequality, two-sided capacity bounds, and additional geometric and analytic assumptions. Combining this with our previous results, we obtain, under the same structural assumptions, the equivalence between the elliptic Harnack inequality together with two-sided capacity bounds and the conjunction of the Poincar\'e inequality and the cutoff Sobolev inequality.}

\footnote{\textsl{Date}: \today}
\footnote{\textsl{MSC2020}: 31E05, 28A80}
\footnote{\textsl{Keywords}: Poincar\'e inequalities, elliptic Harnack inequalities, Faber--Krahn inequalities, Wolff potentials}
\footnote{The author is very grateful to Aobo Chen for helpful discussions.}

\renewcommand{\thefootnote}{\arabic{footnote}}
\setcounter{footnote}{0}

\section{Introduction}\label{sec_intro}

For strongly local regular Dirichlet forms, the relationship among heat kernel estimates, Harnack inequalities, Poincar\'e inequalities, cutoff Sobolev inequalities, and capacity bounds is by now well understood; see, for example, \cite{BBK06,GT12,GH14,AB15,GHL15,Eri26} and the references therein. In particular, for a strongly local regular Dirichlet form on a volume doubling metric measure space, the elliptic Harnack inequality together with two-sided capacity bounds is equivalent to the conjunction of the Poincar\'e inequality and the cutoff Sobolev inequality:
$$\text{EHI}+\text{cap}(\Psi)\Leftrightarrow\text{PI}(\Psi)+\text{CS}(\Psi),$$
where $\Psi$ is a suitable general scaling function. This equivalence plays a fundamental role in the stability theory for the elliptic Harnack inequality; see \cite{BM18,KM23}.

For nonlinear $p$-energies, there is generally neither a linear semi-group nor a heat kernel. Nevertheless, many functional inequalities still make sense. In particular, one may formulate $p$-versions of the elliptic Harnack inequality, the Poincar\'e inequality, the cutoff Sobolev inequality, and the two-sided capacity bounds. It is therefore natural to ask whether the above equivalence has a nonlinear counterpart. We refer to \cite{CGQ22,Shi24,BC23,MS25,Kig23,CGYZ26,AB25,AES25a} for recent developments on nonlinear $p$-energies on fractals and metric measure spaces.

In our previous work \cite{Yan25d}, we proved that, under suitable geometric and analytic assumptions, the elliptic Harnack inequality and the two-sided capacity bounds imply the cutoff Sobolev inequality:
\begin{equation}\label{eq_EHIcapCS}
\text{EHI}+\text{cap}(\Psi)\Rightarrow\text{CS}(\Psi).
\end{equation}
We also proved the converse implication
\begin{equation}\label{eq_PICS}
\text{PI}(\Psi)+\text{CS}(\Psi)\Rightarrow\text{EHI}+\text{cap}(\Psi).
\end{equation}
Thus, the remaining step toward the full nonlinear analogue of the classical equivalence is to derive the Poincar\'e inequality from the elliptic Harnack inequality and the two-sided capacity bounds.

The main result of the present paper establishes precisely this implication. More specifically, for a $p$-energy on a volume doubling metric measure space, under the same structural assumptions as those used to prove (\ref{eq_EHIcapCS}) in \cite{Yan25d}, we prove that
$$\text{EHI}+\text{cap}(\Psi)\Rightarrow \text{PI}(\Psi).$$
Combining this implication with (\ref{eq_EHIcapCS}) and (\ref{eq_PICS}), we obtain the equivalence
\begin{equation}\label{eq_equiv}
\text{EHI}+\text{cap}(\Psi)\Leftrightarrow\text{PI}(\Psi)+\text{CS}(\Psi),
\end{equation}
under the same structural assumptions.

As a consequence, the nonlinear theory now has the same structural form as the classical Dirichlet form theory at the elliptic level: the Harnack--capacity side and the Poincar\'e--cutoff side are equivalent. This gives a robust substitute, in the absence of heat kernels, for the classical equivalences known in the linear setting.

The proof combines the technique developed in our previous work \cite{Yan25d} with a very recent technique introduced in \cite{EK26}. In \cite{EK26}, the authors established, for the first time in the \emph{nonlinear} setting, the equivalence between the parabolic Harnack inequality $\text{PHI}$ and the conjunction of the volume doubling condition and the Poincar\'e inequality $\text{PI}$, on metric measure spaces endowed with an upper gradient structure. Certain arguments in \cite{EK26}, however, are sufficiently robust to be adapted to more general metric measure spaces, including fractals. Following \cite[Lemmas 2.5 and 7.5, Theorem 7.6]{EK26}, the proof of $\text{PI}(\Psi)$ can be reduced to establishing $\text{EHI}$ together with a version of the Faber--Krahn inequality $\text{FK}(\Psi)$, which they call the Poincar\'e-type inequality for functions with zero boundary values; see \cite[(7.1), (7.10)]{EK26}. In the setting of \cite{EK26}, both $\text{FK}$ and $\text{EHI}$ are obtained from $\text{PHI}$: $\text{FK}$ follows from \cite[Proposition 7.1]{EK26}, while $\text{PHI}$ directly implies $\text{EHI}$. In our setting, since $\text{EHI}$ is assumed, it remains to establish $\text{FK}(\Psi)$. Building on our previous result in \cite{Yan25d} on Wolff potential estimates under $\text{EHI}$, $\text{cap}(\Psi)$, and certain geometric and analytic assumptions, we first prove an estimate of the measure of a set in terms of its relative capacity. Combining this estimate with a standard Maz'ya decomposition technique, we obtain $\text{FK}(\Psi)$, and hence $\text{PI}(\Psi)$.

We now turn to the formal statement of our result. To begin with, let us fix the notation. Throughout this paper, $p\in(1,+\infty)$ is fixed. The letters $C,C_1,C_2,C_A, C_B$ will always refer to some positive constants and may change at each occurrence. The sign $\asymp$ means that the ratio of the two sides is bounded from above and below by positive constants. The sign $\lesssim$ ($\gtrsim$) means that the LHS is bounded by positive constant times the RHS from above (below). For $x,y\in \mathbb{R}$, denote $x\vee y=\max\{x,y\}$, $x\wedge y=\min\{x,y\}$, $x_+=\max\{x,0\}$, and $x_-=\max\{-x,0\}$. We use $\# A$ to denote the cardinality of a set $A$.

We say that a function $\Psi:[0,+\infty)\to[0,+\infty)$ is doubling if $\Psi$ is a homeomorphism, which implies that $\Psi$ is strictly increasing continuous and $\Psi(0)=0$, and there exists $C_\Psi>1$, called a doubling constant of $\Psi$, such that $\Psi(2r)\le C_\Psi\Psi(r)$ for any $r>0$. Throughout this paper, we always assume that $\Psi$ is a doubling function with a doubling constant $C_\Psi$, and there exist $\beta_*, \beta^*>0$ with $\beta_*\le\beta^*$ such that
$$\frac{1}{C_\Psi}\left(\frac{R}{r}\right)^{\beta_*}\le \frac{\Psi(R)}{\Psi(r)}\le C_{\Psi}\left(\frac{R}{r}\right)^{\beta^*}\text{ for any }r\le R.$$
Indeed, we can take $\beta^*=\log_2C_\Psi$.

Let $(X,d,m)$ be a complete unbounded metric measure space, that is, $(X,d)$ is a complete unbounded locally compact separable metric space and $m$ is a positive Radon measure on $X$ with full support. Throughout this paper, we always assume that all metric balls are relatively compact. For any $x\in X$ and any $r>0$, denote $B(x,r)=\{y\in X:d(x,y)<r\}$ and $V(x,r)=m(B(x,r))$. If $B=B(x,r)$, then denote $\delta B=B(x,\delta r)$ for any $\delta>0$. Let $\mathcal{B}(X)$ be the family of all Borel measurable subsets of $X$. Let $C(X)$ be the family of all continuous functions on $X$. Let $C_c(X)$ be the family of all continuous functions on $X$ with compact support. Denote $\dashint_A=\frac{1}{m(A)}\int_A$ and $u_A=\dashint_Au\mathrm{d} m$ for any measurable set $A$ with $m(A)\in(0,+\infty)$ and any function $u$ such that the integral $\int_Au\mathrm{d} m$ is well-defined.

We say that the chain condition \ref{eq_CC} holds if there exists $C_{cc}>0$ such that for any $x,y\in X$ and any integer $n\ge1$, there exists a sequence $\{x_k:0\le k\le n\}$ of points in $X$ with $x_0=x$ and $x_n=y$ such that
\begin{equation*}\label{eq_CC}\tag*{CC}
d(x_k,x_{k-1})\le C_{cc} \frac{d(x,y)}{n}\text{ for any }k=1,\ldots,n.
\end{equation*}
Throughout this paper, we always assume \ref{eq_CC}.

We say that the linearly locally connected condition \hypertarget{eq_LLC}{\text{LLC}} holds if there exists $A_{LLC}>1$ such that for any $x_0\in X$ and any $r>0$, for any $x,y\in B(x_0,r)\backslash B(x_0,r/2)$, there exists a connected compact set $K\subseteq B(x_0,A_{LLC}r)\backslash B(x_0,r/(2A_{LLC}))$ containing $x,y$. See also \cite[3.12]{HK98} or \cite[Page 234]{HKST15}.

We say that the volume doubling condition \ref{eq_VD} holds if there exists $C_{VD}>0$ such that
\begin{equation*}\label{eq_VD}\tag*{VD}
V(x,2r)\le C_{VD}V(x,r)\text{ for any }x\in X,r>0.
\end{equation*}
It follows directly from \ref{eq_VD} that, for any $x,y\in X$ and any $R,r>0$ with $r\le R$, we have
\begin{equation}\label{eq_VDa}
\frac{V(x,R)}{V(y,r)}\le C_{VD} \left(\frac{d(x,y)+R}{r}\right)^{\alpha},
\end{equation}
where $\alpha=\log_2C_{VD}$.

We say that $(\mathcal{E},\mathcal{F})$ is a $p$-energy on $(X,d,m)$ if $\mathcal{F}$ is a dense subspace of $L^p(X;m)$ and $\mathcal{E}:\mathcal{F}\to[0,+\infty)$ satisfies the following conditions.

\begin{enumerate}[label=(\arabic*)]
\item $\mathcal{E}^{1/p}$ is a semi-norm on $\mathcal{F}$, that is, for any $f,g\in\mathcal{F}$, $c\in\mathbb{R}$, we have $\mathcal{E}(f)\ge0$, $\mathcal{E}(cf)^{1/p}=|c|\mathcal{E}(f)^{1/p}$ and $\mathcal{E}(f+g)^{1/p}\le\mathcal{E}(f)^{1/p}+\mathcal{E}(g)^{1/p}$.
\item (Closed property) $(\mathcal{F},\mathcal{E}(\cdot)^{1/p}+\lVert {\cdot}\rVert_{L^p(X;m)})$ is a Banach space.
\item (Markovian property) For any $\varphi\in C(\mathbb{R})$ with $\varphi(0)=0$ and $|\varphi(t)-\varphi(s)|\le|t-s|$ for any $t,s\in\mathbb{R}$, for any $f\in\mathcal{F}$, we have $\varphi(f)\in\mathcal{F}$ and $\mathcal{E}(\varphi(f))\le\mathcal{E}(f)$.
\item (Regular property) $\mathcal{F}\cap C_c(X)$ is uniformly dense in $C_c(X)$ and $(\mathcal{E}(\cdot)^{1/p}+\lVert {\cdot}\rVert_{L^p(X;m)})$-dense in $\mathcal{F}$.
\item (Strongly local property) For any $f,g\in\mathcal{F}$ with compact support and $g$ constant in an open neighborhood of $\mathrm{supp}(f)$, we have $\mathcal{E}(f+g)=\mathcal{E}(f)+\mathcal{E}(g)$.
\item ($p$-Clarkson's inequality) For any $f,g\in\mathcal{F}$, we have
\begin{equation*}\label{eq_Cla}\tag*{Cla}
\begin{cases}
\mathcal{E}(f+g)+\mathcal{E}(f-g)\ge2 \left(\mathcal{E}(f)^{\frac{1}{p-1}}+\mathcal{E}(g)^{\frac{1}{p-1}}\right)^{p-1}&\text{if }p\in(1,2],\\
\mathcal{E}(f+g)+\mathcal{E}(f-g)\le2 \left(\mathcal{E}(f)^{\frac{1}{p-1}}+\mathcal{E}(g)^{\frac{1}{p-1}}\right)^{p-1}&\text{if }p\in[2,+\infty).\\
\end{cases}
\end{equation*}
\end{enumerate}
Moreover, we also always assume the following condition.
\begin{itemize}
\item ($\mathcal{F}\cap L^\infty(X;m)$ is an algebra) For any $f,g\in\mathcal{F}\cap L^\infty(X;m)$, we have $fg\in\mathcal{F}$ and
\begin{equation*}\label{eq_Alg}\tag*{Alg}
\mathcal{E}(fg)^{{1}/{p}}\le \lVert f\rVert_{L^\infty(X;m)}\mathcal{E}(g)^{1/p}+\lVert g\rVert_{L^\infty(X;m)}\mathcal{E}(f)^{1/p}.
\end{equation*}
\end{itemize}
Denote $\mathcal{E}_\lambda(\cdot)=\mathcal{E}(\cdot)+\lambda \lVert {\cdot}\rVert^p_{L^p(X;m)}$ for any $\lambda>0$. Indeed, a general condition called the generalized $p$-contraction property was introduced in \cite{KS24a}, which implies \ref{eq_Cla}, \ref{eq_Alg}, and holds on a large family of metric measure spaces.

We list some basic properties of $p$-energies as follows. For any $f,g\in\mathcal{F}$, the derivative
$$\mathcal{E}(f;g)=\frac{1}{p}\frac{\mathrm{d}}{\mathrm{d} t}\mathcal{E}(f+tg)|_{t=0}\in\mathbb{R}$$
exists, the map $\mathcal{E}(f;\cdot):\mathcal{F}\to\mathbb{R}$ is linear, $\mathcal{E}(f;f)=\mathcal{E}(f)$. Moreover, for any $f,g\in\mathcal{F}$, for any $a\in\mathbb{R}$, we have
\begin{equation}\label{eq_quasi_strict}
\mathbb{R}\ni t\mapsto\mathcal{E}(f+tg;g)\in\mathbb{R}\text{ is strictly increasing if and only if }\mathcal{E}(g)>0,
\end{equation}
$$\mathcal{E}(af;g)=\mathrm{sgn}(a)|a|^{p-1}\mathcal{E}(f;g),$$
$$|\mathcal{E}(f;g)|\le\mathcal{E}(f)^{(p-1)/p}\mathcal{E}(g)^{1/p}.$$
Moreover, for any $\lambda>0$, all of the above results remain valid with $\mathcal{E}$ replaced by $\mathcal{E}_\lambda$, and for any $f,g\in\mathcal{F}$, we have
$$\mathcal{E}_\lambda(f;g)=\mathcal{E}(f;g)+\lambda\int_X\mathrm{sgn}(f)|f|^{p-1}g\mathrm{d} m.$$
See \cite[Theorem 3.7, Corollary 3.25]{KS24a} for the proofs of these results.

By \cite[Theorem 2.4]{Sas26}, a $p$-energy $(\mathcal{E},\mathcal{F})$ corresponds to a (canonical) $p$-energy measure $\Gamma:\mathcal{F}\times\mathcal{B}(X)\to[0,+\infty)$, $(f,A)\mapsto\Gamma(f)(A)$ satisfying the following conditions.
\begin{enumerate}[label=(\alph*),ref=(\alph*)]
\item\label{item_meas1} For any $f\in\mathcal{F}$, $\Gamma(f)(\cdot)$ is a positive Radon measure on $X$ with $\Gamma(f)(X)=\mathcal{E}(f)$.
\item\label{item_meas2} For any $A\in\mathcal{B}(X)$, $\Gamma(\cdot)(A)^{1/p}$ is a semi-norm on $\mathcal{F}$.
\item\label{item_meas3} For any $f,g\in\mathcal{F}\cap C_c(X)$, $A\in\mathcal{B}(X)$, if $f-g$ is constant on $A$, then $\Gamma(f)(A)=\Gamma(g)(A)$.
\item\label{item_meas4} ($p$-Clarkson's inequality) For any $f,g\in\mathcal{F}$, for any $A\in\mathcal{B}(X)$, we have
\begin{equation*}
\begin{cases}
\Gamma(f+g)(A)+\Gamma(f-g)(A)\ge2 \left(\Gamma(f)(A)^{\frac{1}{p-1}}+\Gamma(g)(A)^{\frac{1}{p-1}}\right)^{p-1}&\text{if }p\in(1,2],\\
\Gamma(f+g)(A)+\Gamma(f-g)(A)\le2 \left(\Gamma(f)(A)^{\frac{1}{p-1}}+\Gamma(g)(A)^{\frac{1}{p-1}}\right)^{p-1}&\text{if }p\in[2,+\infty).\\
\end{cases}
\end{equation*}
\item\label{item_meas5} (Chain rule) For any $f,g\in\mathcal{F}\cap C_c(X)$, for any piecewise $C^1$ functions $\varphi,\psi:\mathbb{R}\to\mathbb{R}$ with $\varphi(0)=\psi(0)=0$, we have
$$\mathrm{d}\Gamma\left(\varphi(f);\psi(g)\right)=\mathrm{sgn}(\varphi'(f))|\varphi'(f)|^{p-1}\psi'(g)\mathrm{d}\Gamma(f;g),$$
where $\Gamma(f;g)$ is a signed measure given by $\Gamma(f;g)=\frac{1}{p}\frac{\mathrm{d}}{\mathrm{d}t}\Gamma(f+tg)|_{t=0}$.
\item\label{item_meas6} (Leibniz rule) For any $f,g,h\in \mathcal{F}\cap C_c(X)$, we have $\mathrm{d}\Gamma(f;gh)=g \mathrm{d}\Gamma(f;h)+h \mathrm{d}\Gamma(f;g)$.
\end{enumerate}
%Using the chain rule, we have the following condition.
%\begin{itemize}
%\item (Strong sub-additivity) For any $f,g\in\mathcal{F}$, we have $f\vee g, f\wedge g\in\mathcal{F}$ and
%\begin{equation*}\label{eq_SubAdd}\tag*{SubAdd}
%\mathcal{E}(f\vee g)+\mathcal{E}(f\wedge g)\le\mathcal{E}(f)+\mathcal{E}(g).
%\end{equation*}
%\end{itemize}

Let $A_1,A_2\in\mathcal{B}(X)$. We define the capacity between $A_1,A_2$ as
\begin{align*}
&\mathrm{cap}(A_1,A_2)=\inf\left\{\mathcal{E}(\varphi):\varphi\in\mathcal{F},
\begin{array}{l}
\varphi=1\text{ in an open neighborhood of }A_1,\\
\varphi=0\text{ in an open neighborhood of }A_2
\end{array}
\right\},
\end{align*}
here we use the convention that $\inf\emptyset=+\infty$.

We say that the two-sided capacity bounds \hypertarget{eq_cap}{$\text{cap}(\Psi)$} hold if both the capacity upper bound \ref{eq_ucap} and the capacity lower bound \ref{eq_lcap} hold as follows. There exist $C_{cap}>0$, $A_{cap}>1$ such that for any ball $B(x,r)$, we have
\begin{align*}
\mathrm{cap}\left(B(x,r),X\backslash B(x,A_{cap}r)\right)&\le C_{cap} \frac{V(x,r)}{\Psi(r)},\label{eq_ucap}\tag*{$\text{cap}(\Psi)_{\le}$}\\
\mathrm{cap}\left(B(x,r),X\backslash B(x,A_{cap}r)\right)&\ge \frac{1}{C_{cap}} \frac{V(x,r)}{\Psi(r)}.\label{eq_lcap}\tag*{$\text{cap}(\Psi)_{\ge}$}
\end{align*}

Let $U$ be an open subset of $X$. Let
$$\mathcal{F}(U)=\text{the }\mathcal{E}_1\text{-closure of }\mathcal{F}\cap C_c(U).$$
We say that $u\in\mathcal{F}$ is harmonic in $U$ if $\mathcal{E}(u;v)=0$ for any $v\in\mathcal{F}\cap C_c(U)$, denoted by $-\Delta_pu=0$ in $U$. We say that $u\in\mathcal{F}$ is superharmonic in $U$ (resp. subharmonic in $U$) if $\mathcal{E}(u;v)\ge0$ (resp. $\mathcal{E}(u;v)\le0$) for any non-negative $v\in\mathcal{F}\cap C_c(U)$, denoted by $-\Delta_pu\ge0$ in $U$ (resp. $-\Delta_pu\le0$ in $U$). Moreover, if $U$ is bounded, then by \cite[Lemma 6.2]{Yan25c}, we have
$$\mathcal{F}(U)=\left\{u\in\mathcal{F}:\widetilde{u}=0\text{ q.e. on }X\backslash U\right\}.$$
Assuming that $U$ is bounded, then the above equality for harmonic functions, as well as the corresponding inequalities for superharmonic and subharmonic functions, also hold for all $v\in\mathcal{F}(U)$.

We say that the bottom spectrum positivity condition \ref{eq_BSP} holds if for any bounded open subset $U$ of $X$, we have
\begin{equation*}\label{eq_BSP}\tag*{\text{BSP}}
\lambda_1(U)=\inf \left\{\frac{\mathcal{E}(u)}{\lVert u\rVert_{L^p(X;m)}^p}:u\in \mathcal{F}(U)\backslash \{0\}\right\}>0.
\end{equation*}

We say that the lower semi-continuous condition \hypertarget{eq_LSC}{\text{LSC}} holds if for any bounded open subset $U$ of $X$, for any $u\in \mathcal{F}$ which is bounded from below superharmonic in $U$, we have $u$ has a modification which is lower semi-continuous in $U$.

We say that the elliptic Harnack inequality \ref{eq_EHI} holds if there exist $C_H>0$, $A_H>1$ such that for any ball $B(x,r)$, for any $u\in\mathcal{F}$ which is non-negative harmonic in $B(x,A_Hr)$, we have
\begin{equation*}\label{eq_EHI}\tag*{\text{EHI}}
\esup_{B(x,r)}u\le C_H\einf_{B(x,r)}u.
\end{equation*}

We say that the annulus elliptic Harnack inequality \ref{eq_EHIann} holds if there exist $A_1$, $A_2$, $A_3>1$ with $A_1<A_2<A_3$ and $C_H>0$ such that the following holds: for any ball $B(x,r)$ and any $u\in \mathcal{F}$ which is non-negative superharmonic in $B(x,A_3r)$ and harmonic in $B(x,A_3r)\backslash \overline{B(x,r)}$, we have
\begin{equation*}\label{eq_EHIann}\tag*{$\text{EHI}_{\text{ann}}$}
\esup_{B(x,A_2r)\backslash B(x,A_1r)}u\le C_H\einf_{B(x,A_2r)\backslash B(x,A_1r)}u.
\end{equation*}
See also \cite[Theorem 3.1]{Bla01} and \cite[LEMMA 6.3]{GS05} for similar results. However, in addition to harmonicity in the annulus, we also require superharmonicity in the entire ball. By a standard chaining argument, as in these references, it is easy to see that
\begin{center}
\ref{eq_VD} + \hyperlink{eq_LLC}{\text{LLC}} + \ref{eq_EHI} $\Rightarrow$ \ref{eq_EHIann}.
\end{center}

We say that the Poincar\'e inequality \ref{eq_PI} holds if there exist $C_{PI}>0$, $A_{PI}\ge1$ such that for any ball $B(x,r)$, for any $f\in\mathcal{F}$, we have
\begin{equation*}\label{eq_PI}\tag*{PI($\Psi$)}
\int_{B(x,r)}\lvert f-f_{B(x,r)}\rvert^p\mathrm{d} m\le C_{PI}\Psi(r)\int_{{B(x,A_{PI}r)}}\mathrm{d}\Gamma(f).
\end{equation*}
Under \ref{eq_VD} and \ref{eq_PI}, both \ref{eq_BSP} and \ref{eq_lcap} hold; these follow from \cite[Lemma 4.1]{Yan25c} and \cite[Lemma 5.1 (a)]{BB04}, respectively.

The main result of this paper is as follows.

\begin{theorem}\label{thm_main}
Assume \ref{eq_VD}, \hyperlink{eq_LSC}{\text{LSC}}, \ref{eq_BSP}, \ref{eq_EHI}, \hyperlink{eq_cap}{$\text{cap}(\Psi)$}. If either
\begin{enumerate}[label=(\alph*),ref=(\alph*)]
\item\label{cond_main_LLC} \hyperlink{eq_LLC}{\text{LLC}}, or
\item\label{cond_main_EHIann} \ref{eq_EHIann},
\end{enumerate}
holds, then \ref{eq_PI} holds.
\end{theorem}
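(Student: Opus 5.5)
The plan follows the reduction outlined in the introduction: derive a Faber--Krahn type inequality $\text{FK}(\Psi)$, and then run the argument of \cite[Lemmas 2.5 and 7.5, Theorem 7.6]{EK26}, which combines $\text{FK}(\Psi)$ with \ref{eq_EHI} to produce \ref{eq_PI}. In case \ref{cond_main_LLC}, I first note that \ref{eq_VD} + \hyperlink{eq_LLC}{\text{LLC}} + \ref{eq_EHI} give \ref{eq_EHIann} by the chaining argument recalled above. So it suffices to treat case \ref{cond_main_EHIann}. The form of $\text{FK}(\Psi)$ I aim for is the following: there exist $C>0$ and $A\ge1$ such that for every ball $B=B(x,r)$ and every $u\in\mathcal{F}(B)$,
$$\int_B|u|^p\,\mathrm{d}m\le C\Psi(r)\int_{AB}\mathrm{d}\Gamma(u).$$
This is a Poincar\'e inequality for functions with zero boundary values.

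\textbf{Step 1: measure versus capacity.} For a compact (or open) set $K\subseteq B(x,r)$ I want
$$\frac{m(K)}{V(x,r)}\le \phi\left(\frac{\Psi(r)\,\mathrm{cap}(K,X\backslash B(x,2r))}{V(x,r)}\right),$$
with $\phi(t)\to0$ as $t\to0$; ideally $\phi(t)\lesssim t$ for small $t$. To prove it, take the equilibrium potential $u_K$ of the condenser $(K,X\backslash B(x,2r))$. This exists and is superharmonic, using \ref{eq_BSP} for the variational problem and \hyperlink{eq_LSC}{\text{LSC}} for a pointwise representative. Then apply the Wolff potential estimates from \cite{Yan25d}, proved under \ref{eq_EHIann} and $\text{cap}(\Psi)$, which bound $u_K$ from above by the Wolff potential of its Riesz measure $\mu_K$, where $\mu_K(X)=\mathrm{cap}(K,X\backslash B(x,2r))$. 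Since $u_K=1$ q.e. on $K$, integrating the Wolff bound over $K$ and using \ref{eq_VD} with the doubling of $\Psi$ should give $m(K)\lesssim V(x,r)\,\bigl(\Psi(r)\mu_K(X)/V(x,r)\bigr)^{1/(p-1)}$ plus a tail term, which produces $\phi$. The lower capacity bound \ref{eq_lcap} calibrates the scale.

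\textbf{Step 2: Maz'ya truncation.} Let $u\in\mathcal{F}(B)$ with $u\ge0$; the general case follows by splitting into positive and negative parts. Set $u_k=(u-2^k)_+\wedge 2^k$ and $E_k=\{u>2^k\}$. The function $2^{-k}u_{k}$ is admissible (after approximation) for $\mathrm{cap}(E_{k+1},X\backslash B)$, so
$$\mathrm{cap}(E_{k+1},X\backslash 2B)\le 2^{-kp}\,\Gamma(u)(\{2^k<u\le 2^{k+1}\}).$$
Step 1 with $\phi(t)\lesssim t$ then gives $m(E_{k+1})\lesssim \Psi(r)2^{-kp}\Gamma(u)(\{2^k<u\le2^{k+1}\})$. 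Summing $\int u^p\lesssim\sum_k 2^{kp}m(E_k)$ over $k$, and using the fact that the energy measures of the truncations live on disjoint level bands (strong locality together with the chain rule \ref{item_meas5}), I get $\int u^p\lesssim\Psi(r)\mathcal{E}(u)$, which is $\text{FK}(\Psi)$. If Step 1 only yields a weaker $\phi$, I would first prove a small-measure statement, namely that $m(\{u>0\})\le\epsilon V$ forces the inequality, and then bootstrap, as in \cite{EK26}.

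\textbf{Step 3: FK + EHI $\Rightarrow$ PI.} Following \cite[Lemma 7.5, Theorem 7.6]{EK26}: for $f\in\mathcal{F}$, let $h$ be the harmonic replacement of $f$ in $B(x,Ar)$, i.e. $f-h\in\mathcal{F}(B(x,Ar))$, which exists by \ref{eq_BSP}. Applying $\text{FK}(\Psi)$ to $f-h$ gives $\int|f-h|^p\lesssim\Psi(r)\mathcal{E}(f-h)$. Minimality of $h$ and the properties of $\mathcal{E}(\cdot;\cdot)$ bound $\mathcal{E}(f-h)$ by a constant times the energy of $f$ localized to $B(x,Ar)$. The oscillation of $h$ on $B(x,r)$ is controlled via \ref{eq_EHI}, which gives H\"older-type oscillation decay, together with \cite[Lemma 2.5]{EK26} and the capacity upper bound \ref{eq_ucap}. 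Combining the two estimates yields \ref{eq_PI}, possibly with an enlarged constant $A_{PI}$.

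\textbf{Main obstacle.} I expect the main obstacle to be Step 1, specifically getting a linear (or sufficiently strong) dependence of $m(K)$ on the capacity from the Wolff potential bounds. The tail of the Wolff potential and the case $p<2$, where the exponent $1/(p-1)$ is larger than $1$, need care. Localizing $\Gamma$ in Step 3 to a ball rather than the whole space is also delicate.
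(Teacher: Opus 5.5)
Your skeleton matches the paper's. You reduce to $\text{EHI}_{\text{ann}}$, bound $m(K)$ by $\Psi(R)$ times a relative capacity via the Wolff estimates of Lemma \ref{lem_Wolff} applied to the capacitary potential, get $\text{FK}(\Psi)$ by Maz'ya truncation (your Step 2 is Lemma \ref{lem_FK}), and finish with the EK26 argument (Propositions \ref{prop_PI} and \ref{prop_weakPI}). The gap is Step 1, which is the real content of the proof (Proposition \ref{prop_mcap}); you leave it open as the ``main obstacle''. Two ingredients are missing.

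First, the upper bound in Lemma \ref{lem_Wolff} reads $u(x)\le C\bigl(\einf_{B(x,2R)}u+\mathcal{W}^\mu(x,4R)\bigr)$. It is not a bound by the Wolff potential alone, and the infimum term is not small in general (think of $K=\overline{B(x_0,R/2)}$). The paper controls it with Lemma \ref{lem_lambda}. Testing superharmonicity with $u\wedge\lambda$ gives $\lambda^{p-1}\mathrm{cap}(\{u>\lambda\},X\backslash U)\le\mu(U)$. With $\lambda=\einf_{B(x,2R)}u$ one has $B(x_0,R)\subseteq B(x,2R)\subseteq\{u>\lambda-\varepsilon\}$, so the lower capacity bound gives $\lambda\lesssim\bigl(\mu(B(x,2R))\Psi(2R)/V(x,2R)\bigr)^{1/(p-1)}$, which is one more term of the Wolff sum. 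Hence $1\lesssim\sum_{n\ge-2}g_n(x)^{1/(p-1)}$ for $m$-a.e. $x\in K$, where $g_n(x)=\mu(B(x,2^{-n}R))\Psi(2^{-n}R)/V(x,2^{-n}R)$.

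Second, and more seriously, you never turn this pointwise bound into the linear estimate $m(K)\lesssim\Psi(R)\mu(X)$ that Step 2 needs. The form you anticipate is $m(K)\lesssim V\,t^{1/(p-1)}$ with $t=\Psi(R)\mu(X)/V$; this is what Jensen on the ball gives when $p\ge2$. Without the unspecified tail it is strictly weaker than linear for $p>2$ and small $t$, so the Maz'ya sum cannot absorb it. For $p<2$ it can even be false: in $\mathbb{R}^n$ with $p<2n/(n+1)$, take $K$ a small ball. The proposed bootstrap is also not specified.

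The paper closes this with a pigeonhole plus covering argument:
\begin{itemize}
\item Against the weights $2^{-pn}$, pick for a.e. $x\in K$ a scale $r_x=2^{-n_x}R$ with $g_{n_x}(x)\ge c\,2^{-pn_x}$.
\item The estimate $\Psi(R)/\Psi(r)\gtrsim(R/r)^p$ (a consequence of VD and $\text{cap}(\Psi)$) turns this into $V(x,r_x)\lesssim\Psi(R)\mu(B(x,r_x))$, uniformly in $x$.
\item The $5B$ covering lemma with disjoint balls then gives $m(K)\lesssim\Psi(R)\mu(B(x_0,16R))=\Psi(R)\mathrm{cap}(K,X\backslash B(x_0,16R))$ for every $p$.
\end{itemize}

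Your integration idea can also be closed, but not the way you wrote it. First discard the trivial case $t\gtrsim1$, and use $\int_K g_n\,\mathrm{d}m\le C_{VD}\Psi(2^{-n}R)\mu(X)$, which follows from Fubini and VD. For $p\ge2$, apply H\"older on $K$ rather than Jensen on the ball, and absorb the factor $m(K)^{(p-2)/(p-1)}$. For $p<2$, use the lower Wolff bound $\mathcal{W}^\mu(x,2R)\lesssim u(x)\le1$ to get $g_n\lesssim1$, hence $g_n^{1/(p-1)}\lesssim g_n$.

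Finally, Step 3 as described does not close. Oscillation decay of $h$ does not by itself bound $\eosc_{B(x,r)}h$ by energy, and $\text{cap}(\Psi)_{\le}$ is not used there. What closes the argument is restricting to non-negative $f$. There $\esup h\le C_H\einf h$ produces the inequality with a factor $1+C\delta^\kappa$ in front of $\dashint f$. Applying it to $(f-c)_\pm$ with $c$ a median then absorbs that term.
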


\begin{remark}
\begin{enumerate}[label=(\arabic*)]
\item Since \ref{eq_VD}, \hyperlink{eq_LLC}{\text{LLC}}, \ref{eq_EHI} imply \ref{eq_EHIann}, it suffices here to treat case \ref{cond_main_EHIann}.
\item For the case $p=2$, it is well-known in the theory of heat kernel estimates that, under \ref{eq_VD}, the conjunction of \ref{eq_EHI} and \hyperlink{eq_cap}{$\text{cap}(\Psi)$} already implies \ref{eq_PI}; see \cite[THEOREM 1.2]{GHL15}. However, for general $p>1$, due to the \emph{nonlinearity} and the limitation of our techniques, \hyperlink{eq_LSC}{\text{LSC}}, \ref{eq_BSP}, and either \hyperlink{eq_LLC}{\text{LLC}} or \ref{eq_EHIann} are additionally required.
\end{enumerate}
\end{remark}

Combining Theorem \ref{thm_main} with our earlier results in \cite{Yan25d}, we obtain several versions of (\ref{eq_equiv}). To state these results, we first introduce the cutoff Sobolev inequality.

Let $U,V$ be two open subsets of $X$ satisfying $U\subseteq\overline{U}\subseteq V$. We say that $\phi\in\mathcal{F}$ is a cutoff function for $U\subseteq V$ if $0\le\phi\le1$ in $X$, $\phi=1$ in an open neighborhood of $\overline{U}$ and $\mathrm{supp}(\phi)\subseteq V$, where $\mathrm{supp}(f)$ refers to the support of the measure of $|f|\mathrm{d} m$ for any given function $f$.

We say that the cutoff Sobolev inequality \ref{eq_CS} holds if there exist $C_{1},C_{2}>0$, $A_{S}>1$ such that for any ball $B(x,r)$, there exists a cutoff function $\phi\in\mathcal{F}$ for $B(x,r)\subseteq B(x,A_Sr)$ such that for any $f\in\mathcal{F}$, we have
\begin{equation*}\label{eq_CS}\tag*{CS($\Psi$)}
\int_{B(x,A_{S}r)}|\widetilde{f}|^p\mathrm{d}\Gamma(\phi)\le C_{1}\int_{B(x,A_{S}r)}\mathrm{d}\Gamma(f)+\frac{C_{2}}{\Psi(r)}\int_{B(x,A_{S}r)}|f|^p\mathrm{d} m,
\end{equation*}
where $\widetilde{f}$ is a quasi-continuous modification of $f$, such that $\widetilde{f}$ is uniquely determined $\Gamma(\phi)$-a.e. in $X$, see \cite[Section 8]{Yan25a} for more details.

%Under \ref{eq_VD}, by taking $f\equiv1$ in $B(x,A_Sr)$, it is easy to see that \ref{eq_CS} implies \ref{eq_ucap}.

We have the following equivalences.

\begin{corollary}\label{cor_EHIann}
Assume \ref{eq_VD}. The followings are equivalent.
\begin{enumerate}[label=(\arabic*),ref=(\arabic*)]
\item\label{item_EHIann1} \hyperlink{eq_LSC}{\text{LSC}}, \ref{eq_BSP}, \ref{eq_EHI}, \ref{eq_EHIann}, \hyperlink{eq_cap}{$\text{cap}(\Psi)$}.
\item\label{item_EHIann2} \ref{eq_PI}, \ref{eq_CS}.
\end{enumerate}
\end{corollary}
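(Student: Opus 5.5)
The plan is to prove the two implications separately. For \ref{item_EHIann1}$\Rightarrow$\ref{item_EHIann2} we combine Theorem \ref{thm_main} with (\ref{eq_EHIcapCS}). For \ref{item_EHIann2}$\Rightarrow$\ref{item_EHIann1} we combine (\ref{eq_PICS}) with the remarks already made after the definition of \ref{eq_PI}, and supply the missing conditions (\hyperlink{eq_LSC}{\text{LSC}} and \ref{eq_EHIann}) from the regularity theory available under \ref{eq_PI} and \ref{eq_CS}.

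\emph{Direction \ref{item_EHIann1}$\Rightarrow$\ref{item_EHIann2}.} Assume \ref{eq_VD}, \hyperlink{eq_LSC}{\text{LSC}}, \ref{eq_BSP}, \ref{eq_EHI}, \ref{eq_EHIann} and \hyperlink{eq_cap}{$\text{cap}(\Psi)$}. These are exactly the hypotheses of Theorem \ref{thm_main} in case \ref{cond_main_EHIann}, so \ref{eq_PI} follows at once. For \ref{eq_CS}, I invoke (\ref{eq_EHIcapCS}) from \cite{Yan25d}. The paper states that this result holds under the same structural assumptions as Theorem \ref{thm_main}, so the EHI$_{\text{ann}}$ version of that result gives \ref{eq_CS} under the present hypotheses. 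The only point to check is that the version in \cite{Yan25d} is formulated with \ref{eq_EHIann} rather than \hyperlink{eq_LLC}{\text{LLC}}. I expect it is, since the introduction says the assumptions are identical.

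\emph{Direction \ref{item_EHIann2}$\Rightarrow$\ref{item_EHIann1}.} Assume \ref{eq_VD}, \ref{eq_PI} and \ref{eq_CS}.
\begin{enumerate}[label=(\roman*)]
\item By (\ref{eq_PICS}) from \cite{Yan25d}, we get \ref{eq_EHI} and \hyperlink{eq_cap}{$\text{cap}(\Psi)$}.
\item As noted before Theorem \ref{thm_main}, \ref{eq_VD} and \ref{eq_PI} give \ref{eq_BSP} via \cite[Lemma 4.1]{Yan25c}.
\item For \hyperlink{eq_LSC}{\text{LSC}}, I would use De Giorgi--Moser theory, which holds under \ref{eq_VD}, \ref{eq_PI} and \ref{eq_CS} and is developed in \cite{Yan25d}/\cite{Yan25c}. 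A bounded-below superharmonic function satisfies the weak Harnack inequality and the lower estimate $\widetilde u(x)=\lim_{r\to0}\einf_{B(x,r)}u$. This produces a lower semi-continuous representative, as in the classical argument of Kinnunen--Manfredi.
\end{enumerate}

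\emph{Obtaining \ref{eq_EHIann}; main obstacle.} This is the step I expect to be hardest, because \hyperlink{eq_LLC}{\text{LLC}} is not available and the annulus need not be connected. My first attempt would use the fact that PI is a chain-type condition. Together with \ref{eq_VD} and \ref{eq_CC}, it should give a quantitative connectivity of annuli, namely a weak \hyperlink{eq_LLC}{\text{LLC}} at large enough ratios, in the spirit of \cite[3.12]{HK98}. The standard chaining of \ref{eq_EHI} then yields \ref{eq_EHIann} with suitable constants $A_1<A_2<A_3$. As an alternative, which I would check first in \cite{Yan25d}, one can use the Wolff potential or Green-function type estimates derived there from \ref{eq_PI} and \ref{eq_CS}. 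A non-negative function that is superharmonic in $B(x,A_3r)$ and harmonic off $\overline{B(x,r)}$ behaves like a potential of its Riesz measure, which is supported in $\overline{B(x,r)}$. The two-sided Wolff bounds are comparable throughout the annulus $B(x,A_2r)\backslash B(x,A_1r)$, and this comparability gives \ref{eq_EHIann} directly, without any connectivity of the annulus.

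Combining all of the above gives \ref{item_EHIann1}.
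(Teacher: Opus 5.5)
\textbf{Verdict: genuine gap.} Your direction (1)$\Rightarrow$(2) matches the paper exactly: PI$(\Psi)$ comes from Theorem~\ref{thm_main} in case (b), and CS$(\Psi)$ from \cite[Theorem 2.1]{Yan25d}. For (2)$\Rightarrow$(1) the paper proves nothing by hand. It cites \cite[Theorem 2.3]{Yan25d}, which gives every condition in (1) at once, including LSC, BSP and $\mathrm{EHI}_{\mathrm{ann}}$. Your piecewise reconstruction is fine for EHI, cap$(\Psi)$, BSP and, as a sketch, LSC. The step you yourself flag, $\mathrm{EHI}_{\mathrm{ann}}$, is where the gap lies, and neither of your routes closes it.

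\emph{Your first route rests on a false claim.} VD, CC and PI$(\Psi)$ do not force any connectivity of annuli, at any ratio. Take $X=\mathbb{R}$ with $\mathcal{E}(f)=\int_{\mathbb{R}}|f'|^p\,\mathrm{d}x$ and $\Psi(r)=r^p$. Then VD, CC, PI$(\Psi)$ and CS$(\Psi)$ all hold. Yet every annulus $B(x,A_2r)\backslash B(x,A_1r)$ is two intervals separated by $x$, so no LLC-type condition holds and chaining EHI across the annulus cannot work. This is exactly why the corollary lists $\mathrm{EHI}_{\mathrm{ann}}$ in place of LLC.

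\emph{Your second route is circular as it stands.} The two-sided Wolff bound on record here, Lemma~\ref{lem_Wolff} $=$ \cite[Theorem 3.12]{Yan25d}, is proved under all hypotheses of Theorem~\ref{thm_main}. Those hypotheses already include $\mathrm{EHI}_{\mathrm{ann}}$ or LLC. You would need an independent Kilpel\"ainen--Mal\'y type derivation of Wolff bounds from PI$(\Psi)+$CS$(\Psi)$, and you do not supply one.

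\emph{The missing idea} is to use that $u$ is superharmonic on the \emph{whole} ball $B(x,A_3r)$. You can do this with the same De Giorgi--Moser tool you already invoke for LSC.
\begin{enumerate}[label=(\roman*)]
\item Take $A_1=2$, $A_2=3$ and $A_3$ large.
\item The weak Harnack inequality for the non-negative superharmonic $u$ gives, for some $q>0$, $(\dashint_{B(x,4r)}u^q\,\mathrm{d}m)^{1/q}\le C\einf_{B(x,4r)}u\le C\einf_{B(x,3r)\backslash B(x,2r)}u$.
\item For $y$ in the annulus, $u$ is harmonic in $B(y,r/2)$. Set $s=r/(2A_H)$. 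Then EHI and VD give $\esup_{B(y,s)}u\le C_H\einf_{B(y,s)}u\le C_H(\dashint_{B(y,s)}u^q\,\mathrm{d}m)^{1/q}\le C(\dashint_{B(x,4r)}u^q\,\mathrm{d}m)^{1/q}$.
\item Covering the annulus by such balls gives $\mathrm{EHI}_{\mathrm{ann}}$, with no connectivity needed.
\end{enumerate}
Alternatively, cite \cite[Theorem 2.3]{Yan25d} for all of (2)$\Rightarrow$(1), as the paper does.
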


\begin{corollary}\label{cor_LLC}
Assume \ref{eq_VD}, \hyperlink{eq_LLC}{\text{LLC}}. The followings are equivalent.
\begin{enumerate}[label=(\roman*),ref=(\roman*)]
\item\label{item_LLC1} \hyperlink{eq_LSC}{\text{LSC}}, \ref{eq_BSP}, \ref{eq_EHI}, \hyperlink{eq_cap}{$\text{cap}(\Psi)$}.
\item\label{item_LLC2} \ref{eq_PI}, \ref{eq_CS}.
\end{enumerate}
\end{corollary}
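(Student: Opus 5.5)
The plan is to assemble the equivalence from Theorem \ref{thm_main} and the two implications (\ref{eq_EHIcapCS}) and (\ref{eq_PICS}) from \cite{Yan25d}. The only genuinely new ingredient is the Poincar\'e direction, and Theorem \ref{thm_main} already supplies it. Throughout we assume \ref{eq_VD} and \hyperlink{eq_LLC}{\text{LLC}}, as in the statement.

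For \ref{item_LLC1}$\Rightarrow$\ref{item_LLC2}, assume \hyperlink{eq_LSC}{\text{LSC}}, \ref{eq_BSP}, \ref{eq_EHI} and \hyperlink{eq_cap}{$\text{cap}(\Psi)$}.
\begin{enumerate}[label=(\arabic*)]
\item Together with \ref{eq_VD} and \hyperlink{eq_LLC}{\text{LLC}}, these are exactly the hypotheses of Theorem \ref{thm_main} in case \ref{cond_main_LLC}, so \ref{eq_PI} holds.
\item By the chaining remark in the introduction, \ref{eq_VD}, \hyperlink{eq_LLC}{\text{LLC}} and \ref{eq_EHI} give \ref{eq_EHIann}.
\item The implication (\ref{eq_EHIcapCS}) of \cite{Yan25d} is proved under \ref{eq_VD}, \hyperlink{eq_LSC}{\text{LSC}}, \ref{eq_BSP}, \ref{eq_EHI}, \hyperlink{eq_cap}{$\text{cap}(\Psi)$} and \hyperlink{eq_LLC}{\text{LLC}} (or \ref{eq_EHIann}). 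These are the same structural assumptions as in Theorem \ref{thm_main}, so it yields \ref{eq_CS}.
\end{enumerate}
The only care needed here is to check that the precise hypothesis list in \cite{Yan25d} matches ours.

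For \ref{item_LLC2}$\Rightarrow$\ref{item_LLC1}, assume \ref{eq_PI} and \ref{eq_CS}.
\begin{enumerate}[label=(\arabic*)]
\item Under \ref{eq_VD}, the implication (\ref{eq_PICS}) of \cite{Yan25d} gives \ref{eq_EHI} and \hyperlink{eq_cap}{$\text{cap}(\Psi)$}.
\item As recorded after the definition of \ref{eq_PI}, \ref{eq_VD} and \ref{eq_PI} imply \ref{eq_BSP} by \cite[Lemma 4.1]{Yan25c}.
\item It remains to obtain \hyperlink{eq_LSC}{\text{LSC}}.
\end{enumerate}

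I expect \hyperlink{eq_LSC}{\text{LSC}} to be the main point requiring an argument, since it is not among the conditions listed as outputs of (\ref{eq_PICS}). The route I would take is the De Giorgi--Moser machinery available under \ref{eq_VD}, \ref{eq_PI} and \ref{eq_CS}, presumably already developed in \cite{Yan25d} to prove (\ref{eq_PICS}).
\begin{enumerate}[label=(\arabic*)]
\item Let $u\in\mathcal{F}$ be superharmonic and bounded from below in a bounded open set $U$.
\item These tools give a weak Harnack inequality and a lemma on the growth of positivity for $u$ on small balls in $U$.
\item From these one shows, for $m$-a.e.\ $x\in U$, that
$$u(x)=\lim_{r\downarrow0}\einf_{B(x,r)}u.$$
\item The function $\hat u(x)=\lim_{r\downarrow0}\einf_{B(x,r)}u$ is then a lower semi-continuous representative of $u$ in $U$, which is \hyperlink{eq_LSC}{\text{LSC}}.
\end{enumerate}
If \cite{Yan25d} already records this, as is likely given that \hyperlink{eq_LSC}{\text{LSC}} is used there, the step reduces to a citation. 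Either way, \hyperlink{eq_LLC}{\text{LLC}} plays no role in this direction; it is needed only to pass to \ref{eq_EHIann} in the first direction.
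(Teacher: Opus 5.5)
Your proposal is correct and follows the paper's proof. For \ref{item_LLC1}$\Rightarrow$\ref{item_LLC2}, the paper likewise obtains PI$(\Psi)$ from Theorem \ref{thm_main} and CS$(\Psi)$ from \cite[Theorem 2.1]{Yan25d}. For \ref{item_LLC2}$\Rightarrow$\ref{item_LLC1}, the paper simply cites \cite[Theorem 2.3]{Yan25d}, which under VD already gives the whole list, including LSC and BSP. So your fallback of citing it for LSC is exactly what is done, and the De Giorgi--Moser sketch, though a standard and viable route, is not needed.
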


\begin{proof}[Proofs of Corollaries \ref{cor_EHIann} and \ref{cor_LLC}]
The implications ``\ref{item_EHIann2}$\Rightarrow$\ref{item_EHIann1}" and ``\ref{item_LLC2}$\Rightarrow$\ref{item_LLC1}" follow directly from \cite[Theorem 2.3]{Yan25d}. For the converse implications, \ref{eq_PI} follows from Theorem \ref{thm_main}, while \ref{eq_CS} follows from \cite[Theorem 2.1]{Yan25d}.
\end{proof}

\section{Proof}\label{sec_proof}

We will frequently use the following elementary consequence of H\"older's inequality:
\begin{equation}\label{eq_ele}
\int_{B(x_0,R)}\lvert f-f_{B(x_0,R)}\rvert^p \mathrm{d}m\le 2^p\inf_{c\in \mathbb{R}}\int_{B(x_0,R)}\lvert f-c\rvert^p \mathrm{d}m,
\end{equation}
for any ball $B(x_0,R)$ and any function $f$ for which the integrals above are well-defined.

We divide the proof into two steps. In the first step, following the strategy of \cite{EK26}, we reduce the proof of \ref{eq_PI} to proving the following Faber--Krahn inequality.

We say that the Faber--Krahn inequality \ref{eq_FK} holds if there exists $C>0$ such that for any ball $B(x_0,R)$ and any $f\in \mathcal{F}(B(x_0,R))$, we have
\begin{equation*}\label{eq_FK}\tag*{$\text{FK}(\Psi)$}
\int_{B(x_0,R)}\lvert f\rvert^p \mathrm{d}m\le C\Psi(R)\int_{B(x_0,R)} \mathrm{d}\Gamma(f).
\end{equation*}
Obviously, \ref{eq_FK} implies \ref{eq_BSP}.

Let
\begin{align*}
&\mathcal{F}_{\mathrm{loc}}=\left\{u:
\begin{array}{l}
\text{for any relatively compact open set }U,\\
\text{there exists }u^\#\in \mathcal{F}\text{ such that }u=u^\#\text{ }m\text{-a.e. in }U
\end{array}
\right\}.
\end{align*}
For any $u\in \mathcal{F}_{\mathrm{loc}}$, let $\Gamma(u)|_U=\Gamma(u^\#)|_U$, where $u^\#, U$ are given as above, then $\Gamma(u)$ is a well-defined positive Radon measure on $X$, as follows from \ref{item_meas3} and \ref{item_meas6} in the definition of $\Gamma(u)$ for $u\in \mathcal{F}$, together with an argument similar to that in \cite[Corollary 3.2.1]{FOT11}.

\begin{proposition}\label{prop_PI}
Assume that there exist $C>0$, $A>1$, $\varepsilon>0$ with $\frac{1+\varepsilon}{2^{\frac{p-1}{p}}}<1$ such that for any ball $B(x_0,R)$ and any non-negative $f\in \mathcal{F}_{\mathrm{loc}}$, we have
\begin{equation}\label{eq_weakPI}
\left(\dashint_{B(x_0,R)}f^p \mathrm{d}m\right)^{1/p}\le C\left(\frac{\Psi(R)}{V(x_0,R)}\int_{B(x_0,AR)} \mathrm{d}\Gamma(f)\right)^{1/p}+(1+\varepsilon) \dashint_{B(x_0,R)}f \mathrm{d}m.
\end{equation}
Then \ref{eq_PI} holds.
\end{proposition}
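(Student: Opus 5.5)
The plan is to apply \eqref{eq_weakPI} separately to the positive and negative parts of $u-u_B$. Choosing the mean as the splitting constant makes the two $L^1$-averages equal. H\"older's inequality then lets the $L^1$ term be absorbed into the left-hand side, and the hypothesis $(1+\varepsilon)2^{-(p-1)/p}<1$ is exactly what makes this absorption possible.

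Fix a ball $B=B(x_0,R)$ and $u\in\mathcal{F}$, and let $c=u_B$. Set $f_+=(u-c)_+$ and $f_-=(u-c)_-$. First I check that $f_\pm\in\mathcal{F}_{\mathrm{loc}}$ and that they are non-negative. On a relatively compact open set $U$, the constant $c$ agrees with $c\phi$ for some $\phi\in\mathcal{F}\cap C_c(X)$ with $\phi=1$ on $U$; such $\phi$ exists by regularity and \ref{eq_Alg}/the Markovian property. Then $(u-c\phi)_\pm\in\mathcal{F}$ by the Markovian property, and it coincides with $f_\pm$ on $U$. Moreover, by \ref{item_meas3} and the definition of $\Gamma$ on $\mathcal{F}_{\mathrm{loc}}$, we have $\Gamma(f_\pm)\le\Gamma(u)$ on $AB$. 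Here I would use the chain rule \ref{item_meas5} with $\varphi(t)=t_\pm$ (after approximation by $\mathcal{F}\cap C_c$ functions), which even gives $\Gamma(f_+)+\Gamma(f_-)\le\Gamma(u)$. If only the cruder bound $\Gamma(f_\pm)\le\Gamma(u)$ is available, it costs a harmless factor $2$.

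Write $a=\dashint_B f_+\,\mathrm{d}m$, $b=\dashint_B f_-\,\mathrm{d}m$, $P=(\dashint_B f_+^p\,\mathrm{d}m)^{1/p}$ and $N=(\dashint_B f_-^p\,\mathrm{d}m)^{1/p}$. Also set $G_\pm=C\bigl(\frac{\Psi(R)}{V(x_0,R)}\int_{AB}\mathrm{d}\Gamma(f_\pm)\bigr)^{1/p}$. Since $a-b=\dashint_B(u-c)\,\mathrm{d}m=0$, we have $a=b$, and H\"older gives
$$2a=\dashint_B|u-c|\,\mathrm{d}m\le\Bigl(\dashint_B|u-c|^p\,\mathrm{d}m\Bigr)^{1/p}=(P^p+N^p)^{1/p}.$$
Now apply \eqref{eq_weakPI} to get $P\le G_++(1+\varepsilon)a$ and $N\le G_-+(1+\varepsilon)a$. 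Taking the $\ell^p$-norm of the vector $(P,N)$ and using Minkowski's inequality in $\ell^p$ yields
$$(P^p+N^p)^{1/p}\le (G_+^p+G_-^p)^{1/p}+(1+\varepsilon)2^{1/p}a\le (G_+^p+G_-^p)^{1/p}+\frac{1+\varepsilon}{2^{(p-1)/p}}(P^p+N^p)^{1/p}.$$
Because $u\in L^p$, the quantity $(P^p+N^p)^{1/p}$ is finite. Hence it can be absorbed, which gives
$$\dashint_B|u-u_B|^p\,\mathrm{d}m=P^p+N^p\le\Bigl(1-\tfrac{1+\varepsilon}{2^{(p-1)/p}}\Bigr)^{-p}(G_+^p+G_-^p)\le C'\frac{\Psi(R)}{V(x_0,R)}\int_{AB}\mathrm{d}\Gamma(u).$$
Multiplying by $V(x_0,R)$ yields \ref{eq_PI} with $A_{PI}=A$.

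The only delicate step is the measure-theoretic one: justifying $f_\pm\in\mathcal{F}_{\mathrm{loc}}$ and the comparison $\Gamma(f_\pm)(AB)\lesssim\Gamma(u)(AB)$ for general $u\in\mathcal{F}$, as opposed to $u\in\mathcal{F}\cap C_c(X)$. I would first prove the estimate for $u\in\mathcal{F}\cap C_c(X)$, where \ref{item_meas3} and the chain rule apply directly. Then I would pass to general $u$ by regularity: the left-hand side converges in $L^p$, and $\Gamma(u_n)(AB)\to\Gamma(u)(AB)$ up to taking a slightly larger ball or using $\Gamma(u_n)(X)\to\Gamma(u)(X)$ with the seminorm property \ref{item_meas2}. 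Alternatively, one can skip the approximation by noting that the bound $\int_{AB}\mathrm{d}\Gamma(f_\pm)\le\int_{AB}\mathrm{d}\Gamma(u)$ follows from the Markovian property applied locally together with \ref{item_meas3}.
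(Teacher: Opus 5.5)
Your proof is correct, but it gets the key factor $2^{-(p-1)/p}$ by a different mechanism than the paper.

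The paper takes $c$ to be a median of $f$ on $B(x_0,R)$ and applies \eqref{eq_weakPI} to $(f-c)_+$ and $(f-c)_-$ separately. It obtains $\dashint_{B(x_0,R)}(f-c)_+\,\mathrm{d}m\le 2^{-(p-1)/p}\bigl(\dashint_{B(x_0,R)}(f-c)_+^p\,\mathrm{d}m\bigr)^{1/p}$ from H\"older's inequality on $\{f>c\}\cap B(x_0,R)$, a set of measure at most $\frac12V(x_0,R)$. Each sign is absorbed on its own, and \eqref{eq_ele} is then used to pass from the median back to the mean, at the cost of a factor $2^p$.

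You instead take $c=u_B$ and use the balancing $\dashint_B f_+\,\mathrm{d}m=\dashint_B f_-\,\mathrm{d}m=\frac12\dashint_B|u-u_B|\,\mathrm{d}m$. You recover the same factor as $2^{1/p}\cdot\frac12$, from Minkowski's inequality in $\ell^p$ combined with Jensen's inequality for $|u-u_B|$. Your route needs neither the median nor \eqref{eq_ele}, and it gives a better constant by a factor of about $2^p$. The trade-off is that the absorption has to be done jointly for both signs. The paper's one-sided truncation is more modular: each sign only needs the level-set measure bound coming from the median.

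On the technical side, the paper settles $\int_{B(x_0,AR)}\mathrm{d}\Gamma((f-c)_\pm)\le\int_{B(x_0,AR)}\mathrm{d}\Gamma(f)$ in one line by the Markovian property. Your reduction to $u\in\mathcal{F}\cap C_c(X)$ followed by a density argument is sound. No enlargement of the ball is needed in that step: the seminorm property \ref{item_meas2} together with $\Gamma(v)(X)=\mathcal{E}(v)$ gives $\bigl|\Gamma(u_n)(B(x_0,AR))^{1/p}-\Gamma(u)(B(x_0,AR))^{1/p}\bigr|\le\mathcal{E}(u_n-u)^{1/p}$ directly.
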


\begin{proof}
For any ball $B(x_0,R)$ and any $f\in \mathcal{F}$, there exists $c\in \mathbb{R}$ such that
\begin{equation}\label{eq_medium}
\begin{aligned}
m \left(\left\{f\ge c\right\}\cap B(x_0,R)\right)&\ge \frac{1}{2}V(x_0,R),\\
m \left(\left\{f\le c\right\}\cap B(x_0,R)\right)&\ge \frac{1}{2}V(x_0,R).
\end{aligned}
\end{equation}
Applying (\ref{eq_weakPI}) to $(f-c)_+\in \mathcal{F}_{\mathrm{loc}}$, we have
\begin{align*}
&\left(\dashint_{B(x_0,R)}(f-c)_+^p \mathrm{d}m\right)^{1/p}\\
&\le C\left(\frac{\Psi(R)}{V(x_0,R)}\int_{B(x_0,AR)} \mathrm{d}\Gamma((f-c)_+)\right)^{1/p}+(1+\varepsilon) \dashint_{B(x_0,R)}(f-c)_+ \mathrm{d}m.
\end{align*}
By the Markovian property, we have
$$\int_{B(x_0,AR)} \mathrm{d}\Gamma((f-c)_+)\le \int_{B(x_0,AR)} \mathrm{d}\Gamma(f).$$
By H\"older's inequality and (\ref{eq_medium}), we have
\begin{align*}
&\dashint_{B(x_0,R)}(f-c)_+ \mathrm{d}m\\
&\le \left(\frac{m \left(\left\{f>c\right\}\cap B(x_0,R)\right)}{V(x_0,R)}\right)^{1-\frac{1}{p}} \left(\dashint_{B(x_0,R)}(f-c)_+^p \mathrm{d}m\right)^{\frac{1}{p}}\\
&\le \frac{1}{2^{\frac{p-1}{p}}}\left(\dashint_{B(x_0,R)}(f-c)_+^p \mathrm{d}m\right)^{\frac{1}{p}}.
\end{align*}
Hence
\begin{align*}
&\left(\dashint_{B(x_0,R)}(f-c)_+^p \mathrm{d}m\right)^{1/p}\\
&\le C\left(\frac{\Psi(R)}{V(x_0,R)}\int_{B(x_0,AR)} \mathrm{d}\Gamma(f)\right)^{1/p}+\frac{1+\varepsilon}{2^{\frac{p-1}{p}}}\left(\dashint_{B(x_0,R)}(f-c)_+^p \mathrm{d}m\right)^{{1}/{p}}.
\end{align*}
Let $\gamma=1-\frac{1+\varepsilon}{2^{\frac{p-1}{p}}}$, then by assumption, we have $\gamma\in(0,1)$, which gives
$$\int_{B(x_0,R)}(f-c)_+^p \mathrm{d}m\le \left(\frac{C}{\gamma}\right)^p{\Psi(R)}\int_{B(x_0,AR)} \mathrm{d}\Gamma(f).$$
Similarly,
$$\int_{B(x_0,R)}(f-c)_-^p \mathrm{d}m\le \left(\frac{C}{\gamma}\right)^p{\Psi(R)}\int_{B(x_0,AR)} \mathrm{d}\Gamma(f).$$
Therefore, by (\ref{eq_ele}),
\begin{align*}
&\int_{B(x_0,R)} \lvert f-f_{B(x_0,R)}\rvert^p \mathrm{d}m\le 2^p\int_{B(x_0,R)} \lvert f-c\rvert^p \mathrm{d}m\\
&=2^p\int_{B(x_0,R)} (f-c)_+^p \mathrm{d}m+2^p\int_{B(x_0,R)} (f-c)_-^p \mathrm{d}m\\
&\le 2^{p+1}\left(\frac{C}{\gamma}\right)^p{\Psi(R)}\int_{B(x_0,AR)} \mathrm{d}\Gamma(f).
\end{align*}
\end{proof}

The elliptic Harnack inequality \ref{eq_EHI} implies the following standard oscillation estimate.

\begin{lemma}\label{lem_osc}
Assume \ref{eq_EHI}. Then there exist $C>0$, $\kappa\in(0,1]$ depending only on $C_H,A_H$, such that for any ball $B(x_0,R)$, for any $u\in \mathcal{F}_{\mathrm{loc}}$ which is harmonic in $B(x_0,R)$, for any $r\in(0,R)$, we have
$$\eosc_{B(x_0,r)}u \le C \left(\frac{r}{R}\right)^\kappa \eosc_{B(x_0,R)}u.$$
\end{lemma}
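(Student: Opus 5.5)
The plan is the classical Moser argument: oscillation reduction on one scale, then iteration over dyadic scales. The one subtlety is that $u$ is only harmonic (not non-negative), so EHI must be applied to the shifts $u-\einf u$ and $\esup u-u$ on a suitable ball.

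\emph{Reduction to a one-step decay.} Fix a ball $B=B(x_0,s)$ with $B(x_0,A_Hs)\subseteq B(x_0,R)$, and set $M=\esup_{B(x_0,A_Hs)}u$ and $m_0=\einf_{B(x_0,A_Hs)}u$. If $\eosc_{B(x_0,A_Hs)}u=+\infty$ there is nothing to prove, so assume both are finite. To apply EHI we need $M-u$ and $u-m_0$ to be elements of $\mathcal{F}$ harmonic in $B(x_0,A_Hs)$. Since $u\in\mathcal{F}_{\mathrm{loc}}$, we pick $u^\#\in\mathcal{F}$ agreeing with $u$ on a relatively compact neighbourhood of $\overline{B(x_0,A_Hs)}$, and multiply by a cutoff $\phi\in\mathcal{F}\cap C_c(X)$ equal to $1$ near that closed ball. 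By the regular property and (Alg), this produces functions $(M-u^\#)\phi,(u^\#-m_0)\phi\in\mathcal{F}$. Strong locality and property \ref{item_meas3} show that $\mathcal{E}(\cdot;v)$ for $v\in\mathcal{F}\cap C_c(B(x_0,A_Hs))$ depends only on the function near $\mathrm{supp}(v)$, and constants contribute nothing there; hence both functions are harmonic in $B(x_0,A_Hs)$. (If one prefers, shrink $A_Hs$ slightly so that the truncated functions are non-negative on the ball, and then let the radius tend back up.)

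\emph{Applying EHI.} Both functions are non-negative on $B(x_0,A_Hs)$; I take the EHI hypothesis to concern non-negativity on that ball, as is standard. EHI then gives
$$M-\einf_{B}u\le C_H\left(M-\esup_{B}u\right),\qquad \esup_{B}u-m_0\le C_H\left(\einf_{B}u-m_0\right).$$
Adding the two inequalities yields
$$(M-m_0)+\eosc_Bu\le C_H\left((M-m_0)-\eosc_Bu\right),$$
that is,
$$\eosc_{B(x_0,s)}u\le \theta\,\eosc_{B(x_0,A_Hs)}u,\qquad \theta=\frac{C_H-1}{C_H+1}\in[0,1).$$
(If $C_H<1$, EHI forces $u$ to be constant, and the claim is trivial.)

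\emph{Iteration.} For $r<R$, let $k\ge0$ be the largest integer with $A_H^kr\le R$; note that $k=0$ when $r>R/A_H$. Iterating the one-step decay over the balls $B(x_0,A_H^jr)$ for $j=0,\dots,k-1$ gives
$$\eosc_{B(x_0,r)}u\le\theta^k\eosc_{B(x_0,R)}u.$$
Since $A_H^{k+1}r>R$, we have $\theta^k\le\theta^{-1}(r/R)^{\kappa}$ with $\kappa=\log(1/\theta)/\log A_H$. We truncate this to $\kappa\in(0,1]$: decreasing the exponent only weakens the bound, because $r/R<1$. If $\theta=0$, take any $\kappa\in(0,1]$. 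The constant $C=\theta^{-1}$ (or $1$ in the degenerate case) depends only on $C_H$ and $A_H$.

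The main obstacle is the technical justification that the localized, shifted functions lie in $\mathcal{F}$ and are harmonic. This requires the locality of $\mathcal{E}(\cdot;\cdot)$, which follows from the energy measure properties \ref{item_meas3} and \ref{item_meas5}, together with $\mathcal{E}(f;v)=\int\mathrm{d}\Gamma(f;v)$. Once that is settled, the two-sided EHI argument and the scale iteration are routine.
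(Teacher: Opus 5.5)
Your proposal is correct and is the standard Moser-type argument the paper has in mind: two-sided EHI applied to $M-u$ and $u-m_0$ gives $\eosc_{B(x_0,s)}u\le\frac{C_H-1}{C_H+1}\eosc_{B(x_0,A_Hs)}u$, which you then iterate over the scales $A_H^jr$. The paper itself gives no proof and simply calls the lemma a standard oscillation estimate.

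Two small repairs are needed.

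\begin{itemize}
\item Applying (Alg) to $u^\#\phi$ requires $u^\#\in L^\infty$, which is not guaranteed. You can avoid the product altogether. The functions $(M\phi-u^\#)_+$ and $(u^\#-m_0\phi)_+$ lie in $\mathcal{F}$ by the Markovian property and are globally non-negative, so no special reading of EHI is needed. They also coincide a.e.\ with $M-u$ and $u-m_0$ on $B(x_0,A_Hs)$, so by locality they are harmonic there.
\item In the degenerate case $\theta=0$, the constant $C=1$ fails for $r\in(R/A_H,R)$. There you should take $C=A_H$ and $\kappa=1$. Alternatively, enlarge $C_H$ to $\max\{C_H,2\}$ from the start, so that $\theta\in(0,1)$.
\end{itemize}
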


\begin{proposition}\label{prop_weakPI}
Assume \ref{eq_VD}, \ref{eq_FK}, \ref{eq_EHI}. Then (\ref{eq_weakPI}) holds, and hence \ref{eq_PI} holds.
\end{proposition}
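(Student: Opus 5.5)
Fix a ball $B=B(x_0,R)$ and a non-negative $f\in\mathcal{F}_{\mathrm{loc}}$; we may assume $\int_{B(x_0,AR)}\mathrm{d}\Gamma(f)<+\infty$. The plan, following \cite{EK26}, is to split $f$ on a larger ball $B_1=B(x_0,\lambda R)$ (with $\lambda>1$ to be fixed, and $A=\lambda$) into a harmonic part and a part vanishing on the boundary. Replacing $f$ by $f^\#\in\mathcal{F}$ agreeing with $f$ on $B(x_0,AR)$, we let $h$ be the minimizer of $\mathcal{E}$ over $f^\#+\mathcal{F}(B_1)$. It exists and is unique by \ref{eq_BSP} (coercivity on $\mathcal{F}(B_1)$), closedness and strict convexity from \ref{eq_Cla}. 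Set $g=f-h\in\mathcal{F}(B_1)$. Then $h$ is harmonic in $B_1$ and $h\ge0$: comparing with $h_+=h\vee0$ and using the Markovian property and uniqueness of the minimizer (since $f^\#\ge0$ in $B_1$, $h_+\in f^\#+\mathcal{F}(B_1)$ after a cutoff adjustment) gives $h=h_+$. Minimality gives $\mathcal{E}(h)\le\mathcal{E}(f^\#)$, and by strong locality this localizes to $\int_{B_1}\mathrm{d}\Gamma(g)\lesssim\int_{B_1}\mathrm{d}\Gamma(f)$, using the triangle inequality for $\Gamma(\cdot)(B_1)^{1/p}$ and the fact that $\Gamma(h)(X\setminus B_1)=\Gamma(f^\#)(X\setminus B_1)$.

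For the boundary-zero part, \ref{eq_FK} on $B_1$ together with \ref{eq_VD} and the doubling of $\Psi$ gives
$$\Big(\dashint_B|g|^p\mathrm{d}m\Big)^{1/p}\lesssim\Big(\frac{\Psi(\lambda R)}{V(x_0,R)}\int_{B_1}\mathrm{d}\Gamma(f)\Big)^{1/p}\le C_\lambda\Big(\frac{\Psi(R)}{V(x_0,R)}\int_{B(x_0,AR)}\mathrm{d}\Gamma(f)\Big)^{1/p}.$$
Since $\dashint_B|g|\le(\dashint_B|g|^p)^{1/p}$, the contribution of $g$ to both sides of (\ref{eq_weakPI}) is absorbed into the energy term, with constants depending on $\lambda$.

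For the harmonic part, by Minkowski it suffices to show $(\dashint_Bh^p)^{1/p}\le(1+\varepsilon)\dashint_Bh$. Lemma \ref{lem_osc} applied in $B(x_0,\lambda R/2)$ (where $h$ is harmonic) gives
$$\eosc_Bh\le C(2/\lambda)^\kappa\,\eosc_{B(x_0,\lambda R/2)}h.$$
By \ref{eq_EHI} with $h\ge0$, $\esup_{B(x_0,\lambda R/2)}h\le C_H\einf_{B(x_0,\lambda R/2)}h\le C_H\dashint_Bh$, provided $\lambda$ is large relative to $A_H$ (a covering/chaining by \ref{eq_CC} and \ref{eq_VD} handles the gap between $\lambda/2$ and $\lambda/A_H$). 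Hence $\eosc_Bh\le CC_H(2/\lambda)^\kappa\dashint_Bh=:\eta\dashint_Bh$, and so
$$\Big(\dashint_Bh^p\Big)^{1/p}\le\esup_Bh\le\dashint_Bh+\eosc_Bh\le(1+\eta)\dashint_Bh.$$
Choosing $\lambda$ large makes $\eta\le\varepsilon$ for any prescribed small $\varepsilon$. Combining with the $g$ estimate (using $\dashint_Bh\le\dashint_Bf+\dashint_B|g|$) gives (\ref{eq_weakPI}), and Proposition \ref{prop_PI} then yields \ref{eq_PI}.

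The main obstacle is the rigorous construction of $h$ in this abstract setting. This involves existence of the minimizer, the identification $g\in\mathcal{F}(B_1)$ when $f$ is only in $\mathcal{F}_{\mathrm{loc}}$ (handled by a cutoff from \ref{eq_ucap}), non-negativity of $h$, and the localized energy comparison via strong locality. The Harnack/oscillation step itself is routine once $h$ is available.
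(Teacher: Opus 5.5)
Your argument is correct, but it is organized differently from the paper's proof. The paper keeps the harmonic replacement $u$ at the fixed scale $B(x_0,A_HR)$ and introduces a small inner ball $B(x_0,\delta R)$. It proceeds in four steps:
\begin{enumerate}[label=(\roman*)]
\item It proves (\ref{eq_weakPI1}), whose energy constant carries a factor $\delta^{-\alpha}$ coming from VD.
\item It applies (\ref{eq_weakPI1}) to $|f-c|$ with $c=\einf_{B(x_0,\delta R)}u$.
\item It uses Lemma \ref{lem_osc} from $B(x_0,R)$ down to $B(x_0,\delta R)$, so that $\dashint_B|f-c|^p\,\mathrm{d}m$ is bounded by the energy term plus $\delta^{p\kappa}(\dashint_B f\,\mathrm{d}m)^p$.
\item It finishes with $(\dashint_B f^p\,\mathrm{d}m)^{1/p}\le(\dashint_B|f-f_B|^p\,\mathrm{d}m)^{1/p}+f_B$.
\end{enumerate}
You instead push the replacement out to $B(x_0,\lambda R)$ and let the oscillation decay act between scales $\lambda R$ and $R$. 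You combine this with the elementary bound $\esup_B h\le\dashint_B h\,\mathrm{d}m+\eosc_B h$. This needs a single harmonic replacement and no bootstrapping through $|f-c|$. It also does not actually use VD: you can bound $\dashint_B|g|^p\,\mathrm{d}m\le V(x_0,R)^{-1}\int_{B_1}|g|^p\,\mathrm{d}m$ directly, so only $\Psi(\lambda R)\le C_\Psi\lambda^{\beta^*}\Psi(R)$ enters. The price is that the dilation $A=\lambda$ in (\ref{eq_weakPI}), and hence $A_{PI}$, becomes large, depending on $C_H,A_H,\kappa,p$. The paper instead keeps $A=A_H$ and pays with $\delta^{-\alpha}$ in the constant. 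Since PI$(\Psi)$ allows any $A_{PI}\ge1$, both are fine.

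Two of your auxiliary steps can be simplified.

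\textbf{No chaining needed.} The chaining between $\lambda/2$ and $\lambda/A_H$ is unnecessary. It would also need care, since chains from CC need not stay inside the region where $h$ is harmonic. For $\lambda\ge A_H$, apply EHI directly on $B(x_0,\lambda R/A_H)$; this is legitimate because $h$ is non-negative harmonic in $B(x_0,\lambda R)$. Then apply Lemma \ref{lem_osc} from $B(x_0,\lambda R/A_H)$ down to $B$. This gives $\eta=CC_H(A_H/\lambda)^\kappa$ directly.

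\textbf{No cutoff needed.} You do not need a cutoff from cap$(\Psi)_{\le}$, which matters because that bound is not among the hypotheses of this proposition. The function $f^\#$ is provided by the definition of $\mathcal{F}_{\mathrm{loc}}$, and you may replace it by $(f^\#)_+$. Then $h_+-f^\#\in\mathcal{F}(B_1)$ follows immediately from the q.e.\ characterization of $\mathcal{F}(B_1)$. Coercivity and uniqueness for the minimization come from FK$(\Psi)$ on $B_1$, which gives $\mathcal{E}(v)>0$ for $v\in\mathcal{F}(B_1)\setminus\{0\}$, together with Clarkson's inequality.
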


\begin{proof}
Let $A=A_H>1$, with $A_H$ as in \ref{eq_EHI}. Firstly, we prove that there exists $C_1>0$ such that for any ball $B(x_0,R)$, for any non-negative $f\in \mathcal{F}_{\mathrm{loc}}$, for any $\delta\in(0,1)$, we have
\begin{equation}\label{eq_weakPI1}
\int_{B(x_0,R)}f^p \mathrm{d}m\le \frac{C_1\Psi(R)}{\delta^\alpha}\int_{B(x_0,AR)}\mathrm{d}\Gamma(f)+C_1V(x_0,R)\left(\dashint_{B(x_0,\delta R)}f \mathrm{d}m\right)^p,
\end{equation}
where $\alpha$ is the constant appearing in (\ref{eq_VDa}).

Without loss of generality, we may assume that $f$ is bounded. By \cite[Proposition 4.2]{Yan25c} and its proof, together with the regular and strongly local properties, there exists a non-negative bounded $u\in \mathcal{F}_{\mathrm{loc}}$ such that $u$ is harmonic in $B(x_0,AR)$ and $\widetilde{u}=\widetilde{f}$ q.e. on $X\backslash B(x_0,AR)$, which gives $f-u\in \mathcal{F}(B(x_0,AR))$. Moreover, we have
\begin{equation}\label{eq_weakPI2}
\int_{B(x_0,AR)}\mathrm{d}\Gamma(u)=\inf \left\{\int_{B(x_0,AR)}\mathrm{d}\Gamma(v):\widetilde{v}=\widetilde{f}\text{ q.e. on }X\backslash B(x_0,AR)\right\}\le \int_{B(x_0,AR)}\mathrm{d}\Gamma(f).
\end{equation}

Notice that
\begin{align}
\left(\dashint_{B(x_0,R)}f^p \mathrm{d}m\right)^{1/p}\le \left(\dashint_{B(x_0,R)} u^p \mathrm{d}m\right)^{1/p}+\left(\dashint_{B(x_0,R)}\lvert f-u\rvert^p \mathrm{d}m\right)^{1/p}.\label{eq_weakPI3}
\end{align}
By \ref{eq_EHI}, we have
\begin{align}
&\left(\dashint_{B(x_0,R)} u^p \mathrm{d}m\right)^{1/p}\le \esup_{B(x_0,R)} u\le C_H \einf_{B(x_0,R)}u\le C_H \einf_{B(x_0,\delta R)}u\nonumber\\
&\le C_H \dashint_{B(x_0,\delta R)}u \mathrm{d}m\le C_H\dashint_{B(x_0,\delta R)}\lvert f-u\rvert \mathrm{d}m+C_H\dashint_{B(x_0,\delta R)}f \mathrm{d}m.\label{eq_weakPI4}
\end{align}
Since $f-u\in \mathcal{F}(B(x_0,AR))$, by H\"older's inequality, (\ref{eq_VDa}), \ref{eq_FK}, and (\ref{eq_weakPI2}), we have
\begin{align}
&\dashint_{B(x_0,\delta R)}\lvert f-u\rvert \mathrm{d}m\le \left(\dashint_{B(x_0,\delta R)}\lvert f-u\rvert^p \mathrm{d}m\right)^{1/p}\nonumber\\
&\le\left(\frac{1}{V(x_0,\delta R)}\int_{B(x_0,A R)}\lvert f-u\rvert^p \mathrm{d}m\right)^{1/p}\lesssim \left(\frac{\Psi(R)}{\delta^{\alpha}V(x_0,R)}\int_{B(x_0,AR)}\mathrm{d}\Gamma(f-u)\right)^{1/p}\nonumber\\
&\le \left(\frac{\Psi(R)}{\delta^{\alpha}V(x_0,R)}\int_{B(x_0,AR)}\mathrm{d}\Gamma(f)\right)^{1/p}+\left(\frac{\Psi(R)}{\delta^{\alpha}V(x_0,R)}\int_{B(x_0,AR)}\mathrm{d}\Gamma(u)\right)^{1/p}\nonumber\\
&\le2\left(\frac{\Psi(R)}{\delta^{\alpha}V(x_0,R)}\int_{B(x_0,AR)}\mathrm{d}\Gamma(f)\right)^{1/p}.\label{eq_weakPI5}
\end{align}
Similarly, we have
\begin{equation}\label{eq_weakPI6}
\left(\dashint_{B(x_0,R)}\lvert f-u\rvert^p \mathrm{d}m\right)^{1/p}\lesssim \left(\frac{\Psi(R)}{V(x_0,R)}\int_{B(x_0,AR)}\mathrm{d}\Gamma(f)\right)^{1/p}.
\end{equation}
Combining (\ref{eq_weakPI3})--(\ref{eq_weakPI6}), we have
\begin{equation*}
\left(\dashint_{B(x_0,R)}f^p \mathrm{d}m\right)^{1/p}\lesssim \left(\frac{\Psi(R)}{\delta^{\alpha}V(x_0,R)}\int_{B(x_0,AR)}\mathrm{d}\Gamma(f)\right)^{1/p}+\dashint_{B(x_0,\delta R)}f \mathrm{d}m.
\end{equation*}
Taking the $p$-th power of both sides, we obtain (\ref{eq_weakPI1}).

Secondly, we prove (\ref{eq_weakPI}). Without loss of generality, we may further assume that $f$ is bounded. Let $u\in \mathcal{F}_{\mathrm{loc}}$ be the function given in the first part of the proof. Let $\delta\in(0,1)$ be chosen later. Let $c=\einf_{B(x_0,\delta R)}u\in[0,+\infty)$. Applying (\ref{eq_weakPI1}) to $\lvert f-c\rvert\in \mathcal{F}_{\mathrm{loc}}$, we have
\begin{equation}\label{eq_weakPI7}
\int_{B(x_0,R)}\lvert f-c\rvert^p \mathrm{d}m\lesssim \frac{\Psi(R)}{\delta^\alpha}\int_{B(x_0,AR)}\mathrm{d}\Gamma(\lvert f-c\rvert)+V(x_0,R)\left(\dashint_{B(x_0,\delta R)}\lvert f-c\rvert \mathrm{d}m\right)^p.
\end{equation}
By the Markovian property, we have
\begin{equation}\label{eq_weakPI8}
\int_{B(x_0,AR)}\mathrm{d}\Gamma(\lvert f-c\rvert)\le \int_{B(x_0,AR)}\mathrm{d}\Gamma(f).
\end{equation}
Moreover,
\begin{equation*}
\dashint_{B(x_0,\delta R)}\lvert f-c\rvert \mathrm{d}m\le \dashint_{B(x_0,\delta R)}\lvert f-u\rvert \mathrm{d}m+\dashint_{B(x_0,\delta R)}(u-c)\mathrm{d}m,
\end{equation*}
where by (\ref{eq_weakPI5}), we have
\begin{equation*}
\dashint_{B(x_0,\delta R)}\lvert f-u\rvert \mathrm{d}m\lesssim \left(\frac{\Psi(R)}{\delta^{\alpha}V(x_0,R)}\int_{B(x_0,AR)}\mathrm{d}\Gamma(f)\right)^{1/p},
\end{equation*}
and by Lemma \ref{lem_osc} and \ref{eq_EHI}, we have
\begin{align*}
&\dashint_{B(x_0,\delta R)}(u-c)\mathrm{d}m\le \eosc_{B(x_0,\delta R)}u\lesssim \delta^\kappa \eosc_{B(x_0,R)}u\le\delta^\kappa \esup_{B(x_0,R)}u\le C_H \delta^\kappa \einf_{B(x_0,R)}u\\
&\le C_H\delta^\kappa\dashint_{B(x_0,R)}u \mathrm{d}m\le C_H\delta^\kappa \dashint_{B(x_0,R)}\lvert f-u\rvert \mathrm{d}m+C_H\delta^\kappa \dashint_{B(x_0,R)}f \mathrm{d}m\\
&\lesssim  \left(\frac{\Psi(R)}{V(x_0,R)}\int_{B(x_0,AR)}\mathrm{d}\Gamma(f)\right)^{1/p}+\delta^\kappa\dashint_{B(x_0,R)}f \mathrm{d}m,
\end{align*}
where the last inequality follows from H\"older's inequality and (\ref{eq_weakPI6}). Hence
\begin{align*}
\dashint_{B(x_0,\delta R)}\lvert f-c\rvert \mathrm{d}m\lesssim\left(\frac{\Psi(R)}{\delta^{\alpha}V(x_0,R)}\int_{B(x_0,AR)}\mathrm{d}\Gamma(f)\right)^{1/p}+\delta^\kappa\dashint_{B(x_0,R)}f \mathrm{d}m.
\end{align*}
Combining this with (\ref{eq_ele}), (\ref{eq_weakPI7}), and (\ref{eq_weakPI8}), we have
\begin{align*}
&\int_{B(x_0,R)} \lvert f-f_{B(x_0,R)}\rvert^p \mathrm{d}m\le 2^p \int_{B(x_0,R)} \lvert f-c\rvert^p \mathrm{d}m\\
&\lesssim \frac{\Psi(R)}{\delta^\alpha}\int_{B(x_0,AR)}\mathrm{d}\Gamma(f)+\delta^{p\kappa}V(x_0,R)\left(\dashint_{B(x_0,R)}f \mathrm{d}m\right)^p.
\end{align*}
Taking the $p$-th root of both sides, we obtain that there exists $C_2>0$ such that
\begin{align*}
&\left(\dashint_{B(x_0,R)} \lvert f-f_{B(x_0,R)}\rvert^p \mathrm{d}m\right)^{1/p}\\
&\le C_2 \left(\frac{\Psi(R)}{\delta^\alpha V(x_0,R)}\int_{B(x_0,AR)}\mathrm{d}\Gamma(f)\right)^{1/p}+C_2\delta^\kappa \dashint_{B(x_0,R)}f \mathrm{d}m.
\end{align*}
Therefore,
\begin{align*}
&\left(\dashint_{B(x_0,R)} f^p \mathrm{d}m\right)^{1/p}\le\left(\dashint_{B(x_0,R)} \lvert f-f_{B(x_0,R)}\rvert^p \mathrm{d}m\right)^{1/p}+f_{B(x_0,R)}\\
&\le C_2 \left(\frac{\Psi(R)}{\delta^\alpha V(x_0,R)}\int_{B(x_0,AR)}\mathrm{d}\Gamma(f)\right)^{1/p}+(1+C_2\delta^\kappa) \dashint_{B(x_0,R)}f \mathrm{d}m.
\end{align*}
By choosing $\delta\in(0,1)$ sufficiently small so that $\varepsilon=C_2\delta^\kappa$ satisfies $\frac{1+\varepsilon}{2^{\frac{p-1}{p}}}<1$, we obtain (\ref{eq_weakPI}).
\end{proof}

In the second step, we prove \ref{eq_FK} using the Wolff potential estimates established in our previous work \cite{Yan25d}. To this end, we first reduce the proof of \ref{eq_FK} further by means of a standard Maz'ya decomposition technique.

\begin{lemma}\label{lem_FK}
Assume that there exist $C>0$, $A>1$ such that for any ball $B(x_0,R)$ and any Borel subset $K\subseteq B(x_0,R)$, we have
\begin{equation}\label{eq_mcap}
m(K)\le C \Psi(R) \mathrm{cap}(K,X\backslash B(x_0,AR)).
\end{equation}
Then \ref{eq_FK} holds.
\end{lemma}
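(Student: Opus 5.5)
We use the standard Maz'ya truncation argument: split $f$ into dyadic level slabs, apply the capacity estimate (\ref{eq_mcap}) to each superlevel set, and sum using the strong locality of $\Gamma$.

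\emph{Reductions.} Fix a ball $B=B(x_0,R)$ and $f\in\mathcal{F}(B)$. Since $\Gamma(|f|)\le\Gamma(f)$ as measures (Markovian property together with the chain rule), we may assume $f\ge0$. By density of $\mathcal{F}\cap C_c(B)$ in $\mathcal{F}(B)$, continuity of $\mathcal{E}$, and Fatou's lemma on the left-hand side, we may further assume $f\in\mathcal{F}\cap C_c(B)$ with $f\ge0$. Finally, since $\mathrm{supp}(f)\subseteq B$, we have $\int_B\mathrm{d}\Gamma(f)=\mathcal{E}(f)$.

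\emph{Dyadic slabs and the capacity bound.} For $k\in\mathbb{Z}$ set
\[
E_k=\{f\ge 2^k\}\subseteq B, \qquad f_k=\bigl((f-2^{k-1})_+\wedge 2^{k-1}\bigr)/2^{k-1}.
\]
Each $f_k$ lies in $\mathcal{F}$ by the Markovian property. It equals $1$ on the open set $\{f>2^{k}-\eta\}$ for small $\eta>0$, which is a neighbourhood of $E_k$. It vanishes outside the compact set $\mathrm{supp}(f)\cap\{f\ge 2^{k-1}\}\subseteq B$, hence vanishes in a neighbourhood of $X\backslash B(x_0,AR)$, because $A>1$ and there is a positive gap between that compact set and $X\backslash B(x_0,AR)$. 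If the neighbourhood requirement is too delicate, one can slightly enlarge the truncation levels. Therefore
\[
\mathrm{cap}(E_k,X\backslash B(x_0,AR))\le\mathcal{E}(f_k)=2^{-(k-1)p}\,\Gamma(f)\bigl(\{2^{k-1}<f<2^k\}\bigr).
\]
This identity is the key step: it follows from strong locality via properties \ref{item_meas3} and \ref{item_meas5} of $\Gamma$, since $f_k$ is constant off the slab $\{2^{k-1}<f<2^k\}$ and equals $(f-2^{k-1})/2^{k-1}$ on it. One must check that the level sets $\{f=2^{k-1}\}$ and $\{f=2^k\}$ carry no $\Gamma(f)$-mass coming from $f_k$. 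This holds because $f_k$ is constant in a neighbourhood-free sense there, and the chain rule with a piecewise $C^1$ truncation $\varphi$ gives $\mathrm{d}\Gamma(\varphi(f))=|\varphi'(f)|^p\mathrm{d}\Gamma(f)$ with $\varphi'=0$ off the open slab. Alternatively, one can use $\Gamma(f)(\{f=c\})=0$, which also follows from the chain rule.

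\emph{Summation.} Applying (\ref{eq_mcap}) to each $E_k$,
\[
\int_B f^p\,\mathrm{d}m\le\sum_{k\in\mathbb{Z}}2^{(k+1)p}\,m(E_k)\le C\Psi(R)\sum_{k\in\mathbb{Z}}2^{(k+1)p}\,2^{-(k-1)p}\,\Gamma(f)\bigl(\{2^{k-1}<f<2^k\}\bigr).
\]
The slabs are disjoint and contained in $B$, so the right-hand side is at most $4^pC\Psi(R)\int_B\mathrm{d}\Gamma(f)$. This gives \ref{eq_FK}.

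The main obstacle is the rigorous justification of the energy localization $\mathcal{E}(f_k)=2^{-(k-1)p}\Gamma(f)(\{2^{k-1}<f<2^k\})$ in this abstract setting, together with the "open neighbourhood" requirements in the definition of $\mathrm{cap}$. Both are handled by the chain rule \ref{item_meas5} applied to a piecewise linear $\varphi$, combined with the continuity of $f$.
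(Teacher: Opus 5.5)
Your proposal is correct and follows the paper's proof: the same dyadic Maz'ya decomposition, the capacity hypothesis applied to each superlevel set with a piecewise linear truncation of $f$ as test function, and summation over disjoint slabs, with the same constant $4^p=2^{2p}$. The one slip is the claim that $f_k=1$ on $\{f>2^k-\eta\}$; it equals $1$ only on $\{f\ge 2^k\}$. The paper avoids this by using the open sets $\{f>2^k\}$, which are their own open neighbourhoods and still satisfy $\int_B f^p\,\mathrm{d}m\le\sum_k 2^{(k+1)p}m(\{f>2^k\})$. Your level-set worry needs no chain rule: property (c) of $\Gamma$ holds for arbitrary Borel sets, so $\Gamma(f_k)$ does not charge $\{f\le 2^{k-1}\}\cup\{f\ge 2^k\}$.
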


\begin{proof}
For any ball $B(x_0,R)$ and any $u\in \mathcal{F}(B(x_0,R))$, by the $\mathcal{E}_1$-denseness of $\mathcal{F}\cap C_c(B(x_0,R))$, we may assume that $u\in \mathcal{F}\cap C_c(B(x_0,R))$. Then we have
\begin{align*}
\int_{B(x_0,R)}\lvert u\rvert^p \mathrm{d}m=\sum_{n\in \mathbb{Z}}\int_{\left\{2^{n-1}<\lvert u\rvert\le 2^n\right\}}\lvert u\rvert^p \mathrm{d}m\le \sum_{n\in \mathbb{Z}}2^{pn} m \left(\left\{\lvert u\rvert>2^{n-1}\right\}\right).
\end{align*}
For any $n\in \mathbb{Z}$, we have $\{\lvert u\rvert>2^{n-1}\}\subseteq B(x_0,R)$ is open, then by assumption, we have
$$m \left(\left\{\lvert u\rvert>2^{n-1}\right\}\right)\le C\Psi(R)\mathrm{cap}\left(\{\lvert u\rvert>2^{n-1}\},X\backslash B(x_0,AR)\right).$$
Let $v=\left(\frac{\lvert u\rvert-2^{n-2}}{2^{n-2}}\vee0\right)\wedge 1$, then $v\in \mathcal{F}\cap C_c(B(x_0,R))\subseteq \mathcal{F}(B(x_0,AR))$ satisfies $v=1$ on $\{\lvert u\rvert>2^{n-1}\}$, hence by the strongly local property,
$$\mathrm{cap}\left(\{\lvert u\rvert>2^{n-1}\},X\backslash B(x_0,AR)\right)\le \mathcal{E}(v)\le \frac{1}{2^{p(n-2)}}\int_{\left\{2^{n-2}<\lvert u\rvert\le 2^{n-1}\right\}}\mathrm{d}\Gamma(u).$$
Therefore,
\begin{align*}
\int_{B(x_0,R)}\lvert u\rvert^p \mathrm{d}m\le 2^{2p}C\Psi(R)\sum_{n\in \mathbb{Z}}\int_{\left\{2^{n-2}<\lvert u\rvert\le 2^{n-1}\right\}}\mathrm{d}\Gamma(u)=2^{2p}C\Psi(R)\int_{B(x_0,R)}\mathrm{d}\Gamma(u).
\end{align*}
\end{proof}

Now it remains to prove that the following result.

\begin{proposition}\label{prop_mcap}
Assume that all the conditions in Theorem \ref{thm_main} are satisfied. Then (\ref{eq_mcap}) holds.
\end{proposition}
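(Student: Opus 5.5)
Fix a ball $B=B(x_0,R)$ and a Borel set $K\subseteq B$; the goal is (\ref{eq_mcap}). By inner regularity of $m$ and since the capacity is defined through open neighbourhoods, I would first reduce to $K$ compact, or even to $K$ open and relatively compact in $B$. Only the case $\mathrm{cap}(K,X\backslash B(x_0,AR))<+\infty$ needs treatment.

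The idea is to test the measure of $K$ against the equilibrium potential. Set $\Omega=B(x_0,AR)$ with $A$ large, to be chosen in terms of $A_{cap}$, $A_H$ and the constants of \ref{eq_EHIann}. By \ref{eq_BSP} and the standard variational theory on bounded open sets (as in \cite[Proposition 4.2]{Yan25c}), there is an equilibrium potential $u\in\mathcal{F}(\Omega)$ with $0\le u\le1$ and $u=1$ q.e. on $K$. It is superharmonic in $\Omega$ and harmonic in $\Omega\backslash K$, and $\mathcal{E}(u)=\mathrm{cap}(K,X\backslash\Omega)$. The associated measure $\mu=-\Delta_pu\ge0$, which is a Radon measure supported on $\overline K$ by the Riesz representation, satisfies $\mu(X)=\mathcal{E}(u;u)=\mathcal{E}(u)$, since $u=1$ on $\mathrm{supp}\,\mu$. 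By \hyperlink{eq_LSC}{LSC}, $u$ has a lower semicontinuous representative, so pointwise values make sense.

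The key input is the Wolff potential estimate from \cite{Yan25d}, which holds under \ref{eq_VD}, \hyperlink{eq_LSC}{LSC}, \ref{eq_BSP}, \ref{eq_EHI}, \ref{eq_EHIann} and $\text{cap}(\Psi)$. I expect it to give, for $x\in K$ (q.e.),
$$1=u(x)\le C\int_0^{A'R}\left(\frac{\Psi(r)\,\mu(B(x,r))}{V(x,r)}\right)^{\frac{1}{p-1}}\frac{\mathrm{d}r}{r}.$$
Using $\mu(B(x,r))\le\mu(X)=\mathrm{cap}(K,X\backslash\Omega)=:\kappa_0$, the right-hand side is bounded by $C\int_0^{A'R}(\Psi(r)\kappa_0/V(x,r))^{1/(p-1)}\,\mathrm{d}r/r$. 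This crude bound is not integrable at $0$ in general, so I would not use it directly. Instead I would integrate in $x$ over $K$ against $m$ and run a maximal-function / Maz'ya-type argument. I would take the threshold radius $\rho$ defined by $V(x,\rho)\approx m(K)$ (comparable for $x\in K\subseteq B$, by \ref{eq_VDa}) and split the integral at $\rho$.

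For $r\ge\rho$, \ref{eq_VD} together with the lower growth of $\Psi/V$ bounds the tail by $C(\Psi(R)\kappa_0/m(K))^{1/(p-1)}$ after summing the dyadic scales. For $r<\rho$, I would average over $x\in K$ and use Fubini. The quantity $\int_K\mu(B(x,r))/V(x,r)\,\mathrm{d}m(x)\lesssim\mu(X)$ by \ref{eq_VD}, and a Hölder/Jensen step (for $p\ge2$ the exponent $1/(p-1)\le1$ makes Jensen direct; for $p<2$ use $\mu(B(x,r))\le\kappa_0$ to interpolate) controls this part by $C(\Psi(\rho)\kappa_0/m(K))^{1/(p-1)}$.

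Combining, $m(K)\le C\int_K u\,\mathrm{d}m$ is bounded by $m(K)\cdot C(\Psi(R)\kappa_0/m(K))^{1/(p-1)}$, which yields $m(K)\le C\Psi(R)\kappa_0$, i.e. (\ref{eq_mcap}). An alternative route for the lower bound $1\lesssim\cdots$ is to compare $u$ with $\mathrm{cap}$ of balls via \ref{eq_EHIann} and $\text{cap}(\Psi)_{\le}$. If $K$ is small compared to $B$, covering $K$ by balls of radius $\rho$ and using \ref{eq_ucap} on each also gives consistency.

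The main obstacle is exactly the precise form of the Wolff potential estimate available from \cite{Yan25d}, and the averaging step for small radii when $p<2$ (superlinear exponent). If this proves delicate, I would instead use a weak-type estimate: by the Wolff bound, $K\subseteq\{W^\mu\gtrsim1\}$, and the standard estimate $m(\{W^\mu_{R}>t\})\lesssim\Psi(R)\mu(X)t^{-(p-1)}$, proved by a Vitali covering at the scale where the Wolff integrand first exceeds a fraction of $t$, then directly gives (\ref{eq_mcap}).
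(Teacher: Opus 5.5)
Your skeleton matches the paper's: the capacitary potential $u$ of $(K,X\backslash B(x_0,AR))$ with $\mu(X)=\mathcal{E}(u)=\mathrm{cap}(K,X\backslash B(x_0,AR))$, then a Wolff lower bound on $K$, then a covering argument. The gap is that the inequality you only ``expect'' is not what is available, and nothing in the proposal supplies the missing piece.

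Lemma \ref{lem_Wolff} gives $u(x)\le C\bigl(\einf_{B(x,r)}u+\mathcal{W}^{\mu}(x,2r)\bigr)$. With $A=16$ and $r=2R$, this yields only $1\lesssim\einf_{B(x,2R)}u+\mathcal{W}^{\mu}(x,4R)$ for $m$-a.e.\ $x\in K$. For general superharmonic functions the infimum term is genuinely needed (think of a positive constant, where $\mu=0$). Removing it for the capacitary potential is exactly where you must use that $u$ vanishes outside $B(x_0,16R)$.

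The paper does this with the weak-type estimate of Lemma \ref{lem_lambda}, $\lambda^{p-1}\mathrm{cap}(\{u>\lambda\},X\backslash U)\le\mu(U)$, which comes from $\mathcal{E}(u\wedge\lambda)=\int\widetilde{u}\wedge\lambda\,\mathrm{d}\mu\le\lambda\mu(U)$. For $\lambda=\einf_{B(x,2R)}u$ one has $B(x_0,R)\subseteq B(x,2R)\subseteq\{u>\lambda-\varepsilon\}$. The capacity \emph{lower} bound \ref{eq_lcap} then gives $\lambda\lesssim\bigl(\Psi(2R)\mu(B(x,2R))/V(x,2R)\bigr)^{1/(p-1)}$, which is one term of the Wolff sum. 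Note that the relevant input is \ref{eq_lcap}, not \ref{eq_ucap} as your alternative route suggests. Only after this step does $1\lesssim\mathcal{W}^\mu(x,4R)$ hold a.e.\ on $K$.

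A cruder dichotomy also suffices. If $\einf_{B(x,2R)}u\ge c$ for one such $x$, then $(u/c)\wedge1$ tests $\mathrm{cap}(B(x_0,R),X\backslash B(x_0,16R))$. Then \ref{eq_lcap} gives $m(K)\le V(x_0,R)\lesssim\Psi(R)\mathcal{E}(u)$ directly. Otherwise the Wolff term is bounded below at a.e.\ point of $K$.

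Second, your main averaging route is sound only for $p\ge2$. There, H\"older and Fubini over all scales at once give $\int_K\mathcal{W}^\mu\,\mathrm{d}m\lesssim m(K)^{\frac{p-2}{p-1}}(\Psi(R)\kappa_0)^{\frac{1}{p-1}}$ with $\kappa_0=\mu(X)$, and you are done. No split at $\rho$ is needed, which is just as well: (\ref{eq_VDa}) does not give a single $\rho$ with $V(x,\rho)\approx m(K)$ for all $x\in K$ when $\rho\ll R$.

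For $1<p<2$, the interpolation with $\mu(B(x,r))\le\kappa_0$ reintroduces the factor $(\Psi(r)\kappa_0/V(x,r))^{\frac{2-p}{p-1}}$. This is the same quantity you rejected as non-integrable at $r=0$. Concretely, take $\mathbb{R}^n$ with $\Psi(r)=r^p$, a point mass $\mu$ at the centre of $K$, and $n(2-p)\ge p$. Then $\int_K\mathcal{W}^\mu\,\mathrm{d}m=+\infty$, although every property you invoke holds. So your small-scale bound cannot follow from those properties.

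You therefore need your fallback, which is precisely the paper's argument, with one correction. Stopping at ``the first scale where the integrand exceeds a fraction of $t$'' does not control a sum over infinitely many scales; the threshold must be summable in the scale index. The paper requires $\Psi(r)\mu(B(x,r))/V(x,r)\ge c\,2^{-pn}$ at $r=2^{-n}R$, and absorbs the factor $2^{pn}$ via $\Psi(R)/\Psi(2^{-n}R)\gtrsim 2^{pn}$. A threshold $c\,2^{-\beta_* n}$ combined with the standing growth of $\Psi$ works too. Either way you get $V(x,r_x)\lesssim\Psi(R)\mu(B(x,r_x))$. The $5B$ covering and disjointness of the selected balls then yield (\ref{eq_mcap}).
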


We need the following result for preparation; see \cite[Proposition 3.3]{Yan25d} for the definition of Riesz measures associated with superharmonic functions.

\begin{lemma}\label{lem_lambda}
Let $U$ be a bounded open subset of $X$. Let $u\in\mathcal{F}(U)$ be superharmonic in $U$, and let $\mu$ be the corresponding Riesz measure. Then for any $\lambda>0$, we have
$$\lambda^{p-1}\mathrm{cap}\left(\left\{u>\lambda\right\},X\backslash U\right)\le \mu(U).$$
\end{lemma}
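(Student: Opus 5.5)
The plan is to test the superharmonic function $u$ against a truncation of itself, namely
$$v=\frac{u\wedge\lambda}{\lambda},\qquad\text{equivalently } v=\varphi(u)\text{ with }\varphi(t)=\frac{(t\vee0)\wedge\lambda}{\lambda}.$$
Since $u\in\mathcal{F}(U)$ and $\varphi$ is Lipschitz with $\varphi(0)=0$, the Markovian property together with the characterization $\mathcal{F}(U)=\{w\in\mathcal{F}:\widetilde{w}=0\text{ q.e. on }X\backslash U\}$ gives $v\in\mathcal{F}(U)$. Moreover $0\le v\le1$ and $v=1$ on $\{u>\lambda\}$.

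Two facts are needed. First, the Riesz measure satisfies
$$\mathcal{E}(u;w)=\int_U\widetilde{w}\,\mathrm{d}\mu\quad\text{for }w\in\mathcal{F}(U),$$
with $\widetilde w$ quasi-continuous and $\mu$ charging no set of zero capacity; this is the content of \cite[Proposition 3.3]{Yan25d}. Second, $\widetilde{v}\le1$ q.e. Combining them:
$$\mathcal{E}(u;u\wedge\lambda)=\lambda\,\mathcal{E}(u;v)=\lambda\int_U\widetilde{v}\,\mathrm{d}\mu\le\lambda\,\mu(U).$$

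Next I compute the left-hand side using the energy measure. By the chain rule \ref{item_meas5} applied to $\psi(t)=t\wedge\lambda$, whose derivative is $1_{\{t<\lambda\}}$ away from the single kink,
$$\mathrm{d}\Gamma(u;u\wedge\lambda)=1_{\{u<\lambda\}}\,\mathrm{d}\Gamma(u).$$
On the other hand, applying the chain rule to $u\wedge\lambda$ itself gives $\mathrm{d}\Gamma(u\wedge\lambda)=1_{\{u<\lambda\}}\mathrm{d}\Gamma(u)$. Hence
$$\mathcal{E}(u;u\wedge\lambda)=\mathcal{E}(u\wedge\lambda)=\lambda^p\mathcal{E}(v).$$
The chain rule is stated only for $\mathcal{F}\cap C_c(X)$, so to use it here I would first approximate $u$ in $\mathcal{E}_1$ by functions in $\mathcal{F}\cap C_c(U)$. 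Both sides are continuous under this approximation: the left side through continuity of $\mathcal{E}(\cdot;\cdot)$, and the right side through the truncation's continuity in $\mathcal{F}$ (e.g.\ by Clarkson). The level set $\{u=\lambda\}$ does not matter, because it carries no energy measure by the chain rule with $\psi$ constant there.

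Putting the two computations together gives $\lambda^p\mathcal{E}(v)\le\lambda\mu(U)$, that is,
$$\lambda^{p-1}\mathcal{E}(v)\le\mu(U).$$
It remains to show $\mathrm{cap}(\{u>\lambda\},X\backslash U)\le\mathcal{E}(v)$. The obstacle is that the definition of capacity requires a test function equal to $1$ and $0$ on open neighborhoods, while $v$ is only $1$ on $\{u>\lambda\}$ and $0$ q.e. off $U$, and these sets need not be open.

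For this step I would fix $\eta\in(0,1)$ and replace $v$ by
$$v_\eta=\left(\frac{u\wedge\lambda-(1-\eta)\lambda}{\eta\lambda}\vee 0\right)\wedge 1,$$
which is built from an approximating sequence $u_k\in\mathcal{F}\cap C_c(U)$ of $u$. Each such function is $1$ on an open neighborhood of $\{u_k>\lambda(1-\eta/2)\}$ and has compact support in $U$. Its energy is controlled, via the same chain-rule identity, by $\eta^{-p}\lambda^{-p}\mathcal{E}(u_k\wedge\lambda)$ restricted to the level band, and this is at most $\lambda^{-p}\mathcal{E}(u\wedge\lambda)/(1-\eta)^{p}$ in the limit after rescaling the truncation levels appropriately. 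I would then let $k\to\infty$ and $\eta\to0$, using the superharmonic lower semicontinuity only implicitly through quasi-continuity of $\widetilde u$. If this limiting argument proves delicate, the fallback is simpler: apply the whole inequality with $\lambda$ replaced by $\lambda'<\lambda$, and use that $\{u>\lambda\}$ is contained in the interior of the region where the continuous approximants exceed $\lambda'$. Letting $\lambda'\uparrow\lambda$ then gives the claim.
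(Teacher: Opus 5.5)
Your main chain is the paper's. You get $\mathcal{E}(u;u\wedge\lambda)=\int_U(\widetilde u\wedge\lambda)\,\mathrm{d}\mu\le\lambda\mu(U)$ from the Riesz measure, and $\mathcal{E}(u;u\wedge\lambda)=\mathcal{E}(u\wedge\lambda)$ by strong locality; your chain-rule computation just makes this explicit. You then use $\frac{u}{\lambda}\wedge1$ as the competitor for the capacity.

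The part that does not work is your last paragraph, and there are two problems with it.

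\textbf{The $v_\eta$ construction.} As written, $v_\eta$ equals $1$ only on $\{u_k\ge\lambda\}$. Where $u_k=\lambda(1-\eta/2)$ it equals $1/2$, so it is not $1$ on a neighbourhood of $\{u_k>\lambda(1-\eta/2)\}$. Its energy is $(\eta\lambda)^{-p}\Gamma(u_k)(\{(1-\eta)\lambda<u_k<\lambda\})$, which carries a factor $\eta^{-p}$. It is not bounded by $(1-\eta)^{-p}\lambda^{-p}\mathcal{E}(u\wedge\lambda)$, and typically blows up as $\eta\to0$.

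\textbf{The fallback.} It uses the right competitor, $\frac{u_k\wedge\lambda'}{\lambda'}$. But the containment $\{u>\lambda\}\subseteq\{u_k>\lambda'\}$ does not follow from $\mathcal{E}_1$-convergence, which only gives quasi-uniform convergence along a subsequence. It is also false in general: $\{u_k>\lambda'\}$ is relatively compact in $U$, while $\{u>\lambda\}$ need not be.

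In fact no approximation can rescue the literal open-neighbourhood definition here. Suppose $\varphi=1$ a.e. on an open $G_1\supseteq A_1$ and $\varphi=0$ a.e. on an open $G_2\supseteq A_2$. Then $m(G_1\cap G_2)=0$, and full support of $m$ gives $G_1\cap G_2=\emptyset$. So $\mathrm{cap}(A_1,A_2)<+\infty$ forces $\overline{A_1}\cap A_2=\emptyset$.

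This can fail for the sets in the lemma. Take, in a Euclidean ball with $p=2$, a convergent sum of multiples of capacitary potentials of tiny balls $B(z_j,r_j)\subseteq U$, with $z_j$ converging to a point of $\partial U$. This gives a superharmonic $u\in\mathcal{F}(U)$ with $\mu(U)<+\infty$, but $\overline{\{u>\lambda\}}\cap(X\backslash U)\neq\emptyset$ for any choice of representative. The literal left-hand side is then $+\infty$.

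The paper avoids this by declaring $\frac{u}{\lambda}\wedge1$ admissible simply because it lies in $\mathcal{F}(U)$ and $\frac{\widetilde u}{\lambda}\wedge1=1$ q.e. on $\{u>\lambda\}$. That is, it reads $\mathrm{cap}(\cdot,X\backslash U)$ in the quasi-everywhere sense: the infimum of $\mathcal{E}(\varphi)$ over $\varphi\in\mathcal{F}(U)$ with $\widetilde\varphi\ge1$ q.e. on the set. Adopt that reading and your $v$ is directly admissible. The whole limiting step should then be dropped.
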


\begin{proof}
We follow the argument used in the proof of \cite[LEMMA 3.9]{KM94Acta}. On the one hand, since $\frac{u}{\lambda}\wedge 1\in \mathcal{F}(U)$ and $\frac{\widetilde{u}}{\lambda}\wedge 1=1$ q.e. on $\{u>\lambda\}$, we have
$$\mathrm{cap}\left(\left\{u>\lambda\right\},X\backslash U\right)\le \mathcal{E}(\frac{u}{\lambda}\wedge 1)=\frac{1}{\lambda^p}\mathcal{E}(u\wedge \lambda).$$
On the other hand, since $u\wedge \lambda\in \mathcal{F}(U)$, by the strongly local property, we have
$$\mathcal{E}(u\wedge \lambda)=\mathcal{E}(u;u\wedge \lambda)=\int_U \left(\widetilde{u}\wedge \lambda\right) \mathrm{d}\mu\le \lambda\mu(U).$$
Therefore,
$$\mathrm{cap}\left(\left\{u>\lambda\right\},X\backslash U\right)\le \frac{1}{\lambda^p}\mathcal{E}(u\wedge \lambda)\le \frac{1}{\lambda^{p-1}}\mu(U).$$
\end{proof}

We recall the following Wolff potential estimates from \cite{Yan25d}. Notice that, in \cite{Yan25d}, assumptions in addition to \ref{eq_VD}, \ref{eq_EHI}, and \hyperlink{eq_cap}{$\text{cap}(\Psi)$} are used to establish these estimates.

\begin{lemma}[{\cite[Theorem 3.12]{Yan25d}}]\label{lem_Wolff}
Assume that all the conditions in Theorem \ref{thm_main} are satisfied. Then there exists $C>0$ such that for any bounded open set $U$, for any $u\in\mathcal{F}$ which is non-negative superharmonic in $U$, for any Lebesgue point $x_0\in U$ of $u$, for any $R>0$ with $B(x_0,4R)\subseteq U$, we have
\begin{equation*}
\frac{1}{C}\mathcal{W}^{\mu[u]}(x_0,R)\le u(x_0)\le C \left(\einf_{B(x_0,R)}u+\mathcal{W}^{\mu[u]}(x_0,2R)\right),
\end{equation*}
where $\mu[u]$ is the Riesz measure associated with $u$ in $U$, and
$$\mathcal{W}^\mu(x,R)=\sum_{n=0}^{+\infty} \left(\frac{\mu(B(x,\frac{1}{2^n}R))}{\mathrm{cap}(B(x,\frac{1}{2^{n+1}}R),X\backslash B(x,\frac{1}{2^n}R))}\right)^{{1}/{(p-1)}},$$
is the Wolff potential of a measure $\mu$.
\end{lemma}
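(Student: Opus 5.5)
Since this is exactly \cite[Theorem 3.12]{Yan25d}, in the present paper it can simply be quoted. To reprove it, I would follow the Kilpel\"ainen--Mal\'y scheme \cite{KM94Acta}, replacing Sobolev and Poincar\'e inequalities by \ref{eq_EHI}, \ref{eq_EHIann} and the capacity bounds \hyperlink{eq_cap}{$\text{cap}(\Psi)$}. Throughout, write $r_n=2^{-n}R$, $B_n=B(x_0,r_n)$ and $\mu=\mu[u]$. By \ref{eq_VD} and \hyperlink{eq_cap}{$\text{cap}(\Psi)$}, one has $\mathrm{cap}(B_{n+1},X\backslash B_n)\asymp V(x_0,r_n)/\Psi(r_n)$, possibly after a harmless change of scales using $A_{cap}$ and the doubling of $\Psi$. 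By \hyperlink{eq_LSC}{\text{LSC}} we work with the lower semi-continuous representative, and the Lebesgue point assumption lets us identify $u(x_0)$ with $\lim_n \einf_{B_n}u$.

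\emph{Lower bound.} The key one-scale estimate I would aim for is
$$\mu(B_{n+1})\lesssim \mathrm{cap}(B_{n+2},X\backslash B_{n+1})\left(\einf_{B_{n+2}}u-\einf_{B_{n}}u\right)^{p-1}.$$
\begin{enumerate}[label=(\roman*)]
\item Set $m_n=\einf_{B_n}u$ and $v=\min(u,m_{n+2})-m_n\ge0$.
\item Let $w$ be the $p$-superharmonic potential (capacitary-type function) in $B_n$ of the measure $\mu|_{B_{n+1}}$. A comparison principle gives $w\le u-m_n$ in $B_n$.
\item Test with a cutoff $\phi$ for $B_{n+1}\subseteq B_n$. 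Lemma \ref{lem_lambda}, applied at the level $\lambda$ equal to the infimum of $w$ over $\partial B_{n+1}$, links $\mu(B_{n+1})$ to $\lambda^{p-1}$ times a capacity.
\item \ref{eq_EHIann} shows that $w$, which is harmonic off $\overline{B_{n+1}}$, is comparable to a constant on the annulus. This lets me pass from this $\lambda$ to $m_{n+2}-m_n$.
\end{enumerate}
Summing the one-scale estimate over $n$ telescopes to $u(x_0)-\einf_{B(x_0,R)}u\le u(x_0)$, which gives $\mathcal{W}^{\mu}(x_0,R)\lesssim u(x_0)$.

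\emph{Upper bound.} Here I would run the Kilpel\"ainen--Mal\'y iteration.
\begin{enumerate}[label=(\roman*)]
\item Define levels $a_0=\einf_{B_0}u$ and $a_{n+1}=a_n+\delta_n$. Choose $\delta_n$ as the smallest number for which a weak-type density condition, such as $m(\{u<a_{n+1}\}\cap B_{n+1})\le \tfrac12 V(x_0,r_{n+1})$, holds at scale $n+1$.
\item Prove a one-step bound
$$\delta_n\lesssim \tfrac12\delta_{n-1}+\left(\frac{\mu(B_n)}{\mathrm{cap}(B_{n+1},X\backslash B_n)}\right)^{1/(p-1)}.$$
To do this, test superharmonicity with the truncations $(\min(u,a_{n+1})-a_n)_+$ times cutoff functions. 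Use Lemma \ref{lem_lambda} to control the part coming from $\mu$. Replace the Sobolev/Poincar\'e step by a capacity-vs-measure comparison, namely \hyperlink{eq_cap}{$\text{cap}(\Psi)_\ge$}, combined with the harmonic replacement and \ref{eq_EHI}. This bounds the $m$-density of the set where the truncation is large.
\item Summing the one-step bound gives $\lim a_n\lesssim \einf_{B(x_0,R)}u+\mathcal{W}^{\mu}(x_0,2R)$.
\item Finally, $u(x_0)\le\lim a_n$ by the Lebesgue point property together with the density condition.
\end{enumerate}

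The main obstacle is the one-step estimate in the upper bound. Without an upper gradient there is no Sobolev inequality available, so the De Giorgi-type density gain must be produced from \ref{eq_EHI}, \ref{eq_BSP} and the capacity lower bound alone. This is exactly where the extra hypotheses \hyperlink{eq_LSC}{\text{LSC}}, \ref{eq_BSP} and \ref{eq_EHIann} are consumed. I would first try to obtain the density gain by comparing the truncated function with its harmonic replacement in $B_n$ and applying \ref{eq_EHI} to that replacement.
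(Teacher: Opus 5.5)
Quoting \cite[Theorem 3.12]{Yan25d} is exactly what the paper does: Lemma \ref{lem_Wolff} is imported without proof, so your first sentence is the paper's route. Your reproof sketch, however, does not work as written.

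\textbf{Lower bound.} Step (iii) uses Lemma \ref{lem_lambda} in the wrong direction. Apply it to $w$, whose Riesz measure in $B_n$ is $\mu|_{B_{n+1}}$, at a level $\lambda$ below $\einf_{B_{n+1}}w$. It gives $\lambda^{p-1}\mathrm{cap}(B_{n+1},X\backslash B_n)\le\mu(B_{n+1})$. This is an \emph{upper} bound on the infimum of $w$ in terms of $\mu$; it is exactly how the paper uses the lemma in the proof of Proposition \ref{prop_mcap}. For $\mathcal{W}^{\mu}(x_0,R)\lesssim u(x_0)$ you need the reverse inequality $\mu(B_{n+1})\lesssim(\einf w)^{p-1}\,\mathrm{cap}$, and that requires a separate argument. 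One that works in this framework runs as follows.
\begin{itemize}
\item[(a)] Let $\nu$ be $\mu$ restricted to a small ball $B(x_0,s)$, and let $\phi$ be an admissible function for $\text{cap}(\Psi)_{\le}$ equal to $1$ on an open set $O\supseteq\overline{B(x_0,A_1s)}$. Then $\nu(X)=\mathcal{E}(w;\phi)\le\Gamma(w)(X\backslash O)^{(p-1)/p}\mathcal{E}(\phi)^{1/p}$.
\item[(b)] On $X\backslash O$, the maximum principle gives $w\le\Lambda':=\esup_{B(x_0,A_2s)\backslash B(x_0,A_1s)}w$. Hence $\Gamma(w)(X\backslash O)\le\mathcal{E}(w\wedge\Lambda')=\int(\widetilde w\wedge\Lambda')\,\mathrm{d}\nu\le\Lambda'\nu(X)$. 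This uses the identity from the \emph{proof} of Lemma \ref{lem_lambda}, not its statement.
\item[(c)] It follows that $\nu(X)\le\Lambda'^{p-1}\mathcal{E}(\phi)$.
\item[(d)] $\text{EHI}_{\text{ann}}$ and the minimum principle give $\Lambda'\le C_H\einf_{B(x_0,A_2s)}w\le C_H\bigl(\einf_{B(x_0,A_2s)}u-m_n\bigr)$.
\end{itemize}
The scales must be separated by a fixed factor so that $B(x_0,A_3s)$ and the cutoff fit inside $B_n$.

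\textbf{Upper bound.} This is where the real content lies, and the sketch leaves the one-step estimate unproved. Two of the tools you name are not available.
\begin{itemize}
\item[(1)] Testing with truncations times cutoffs produces, via the Leibniz and chain rules, terms such as $\int v\,\phi^{p-1}\,\mathrm{d}\Gamma(u;\phi)$. These are controlled only through $\int v^{p}\,\mathrm{d}\Gamma(\phi)$. Since energy measures are in general singular with respect to $m$, that term needs a cutoff Sobolev inequality, which is not among the hypotheses of Theorem \ref{thm_main}.
\item[(2)] $\text{cap}(\Psi)_{\ge}$ concerns balls only. Turning the weak-type capacity bound of Lemma \ref{lem_lambda} into an $m$-density bound for an arbitrary level set requires $m(E)\lesssim\Psi(R)\,\mathrm{cap}(E,X\backslash B(x_0,AR))$. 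That is precisely \eqref{eq_mcap}, which this paper \emph{deduces from} Lemma \ref{lem_Wolff}, so using it here is circular. Without circularity, Lemma \ref{lem_lambda} together with $\text{cap}(\Psi)_{\ge}$ yields only an $\einf$-bound on a ball, as in the proof of Proposition \ref{prop_mcap}, not a density bound.
\end{itemize}
Finally, the selection rule in (i) has its inequality reversed. Since $m(\{u<a_n+\delta\}\cap B_{n+1})$ is nondecreasing in $\delta$, the smallest $\delta$ making it $\le\frac12V(x_0,r_{n+1})$ is degenerate. What you need, and what step (iv) actually uses, is that the sublevel set occupies at least a fixed fraction of the ball.
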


We now prove Proposition \ref{prop_mcap}, which completes the proof of Theorem \ref{thm_main}.

\begin{proof}[Proof of Proposition \ref{prop_mcap}]
Let $B(x_0,R)$ be a ball and let $K\subseteq B(x_0,R)$ be a Borel set. By the regularity of $m$ and $\mathrm{cap}(\cdot,X\backslash B(x_0,16R))$, it suffices to prove that for any compact $K\subseteq B(x_0,R)$, we have
$$m(K)\lesssim \Psi(R)\mathrm{cap}(K,X\backslash B(x_0,16R)).$$

Let $u$ and $\mu$ be the capacitary potential and the capacitary measure, respectively, associated with $\left(K,X\backslash B(x_0,16R)\right)$, as in \cite[Lemma 3.8, Remark 3.9]{Yan25d}, then $u\in \mathcal{F}(B(x_0,16R))$ is non-negative superharmonic in $B(x_0,16R)$, $0\le u\le 1$ in $X$, $\widetilde{u}=1$ q.e. on $K$, $\mathrm{supp}(\mu)\subseteq K$, and
\begin{equation}\label{eq_mcap1}
\mu(B(x_0,16R))=\mu(K)=\mathcal{E}(u)=\mathrm{cap}(K,X\backslash B(x_0,16R)).
\end{equation}
By Lemma \ref{lem_Wolff}, for $m$-a.e. $x\in K$, we have
\begin{equation}\label{eq_mcap2}
1\lesssim \einf_{B(x,2R)}u+\mathcal{W}^\mu(x,4R),
\end{equation}
where $\einf_{B(x,2R)}u\in[0,1]$, and by \ref{eq_VD}, \hyperlink{eq_cap}{$\text{cap}(\Psi)$}, we have
\begin{equation}\label{eq_mcap3}
\mathcal{W}^\mu(x,4R)\asymp\sum_{n=-2}^{+\infty} \left(\frac{\mu(B(x,\frac{1}{2^n}R))}{V(x,\frac{1}{2^n}R)}\Psi(\frac{1}{2^n}R)\right)^{1/(p-1)}.
\end{equation}

We claim that
\begin{equation}\label{eq_mcap4}
1\lesssim \sum_{n=-2}^{+\infty} \left(\frac{\mu(B(x,\frac{1}{2^n}R))}{V(x,\frac{1}{2^n}R)}\Psi(\frac{1}{2^n}R)\right)^{1/(p-1)}.
\end{equation}
Indeed, if $\einf_{B(x,2R)}u=0$, then it follows directly from (\ref{eq_mcap2}) and (\ref{eq_mcap3}). Assume $\lambda=\einf_{B(x,2R)}u\in(0,1]$, then for any $\varepsilon\in(0,\lambda)$, we have
$$K\subseteq B(x_0,R)\subseteq B(x,2R)\subseteq \{u>\lambda-\varepsilon\}\subseteq B(x_0,16R).$$
By Lemma \ref{lem_lambda} and (\ref{eq_mcap1}), we have
\begin{align*}
&\mathrm{cap}\left(B(x_0,R),X\backslash B(x_0,16R)\right)\le\mathrm{cap}\left(\{u>\lambda-\varepsilon\},X\backslash B(x_0,16R)\right)\\
&\le \frac{1}{(\lambda-\varepsilon)^{p-1}}\mu(B(x_0,16R))=\frac{1}{(\lambda-\varepsilon)^{p-1}}\mu(B(x,2R)).
\end{align*}
Letting $\varepsilon\downarrow0$, by \ref{eq_VD}, \hyperlink{eq_cap}{$\text{cap}(\Psi)$}, we have
\begin{align*}
&\einf_{B(x,2R)}u=\lambda\\
&\le \left(\frac{\mu(B(x,2R))}{\mathrm{cap}\left(B(x_0,R),X\backslash B(x_0,16R)\right)}\right)^{1/(p-1)}\lesssim \left(\frac{\mu(B(x,2R))}{V(x,2R)}\Psi(2R)\right)^{1/(p-1)}.
\end{align*}
Combining this with (\ref{eq_mcap2}) and (\ref{eq_mcap3}), we also have (\ref{eq_mcap4}).

By (\ref{eq_mcap4}), there exist $C_1>0$ and $K_0\subseteq K$ with $m(K_0)=m(K)$ such that, for any $x\in K_0$, we have
$$\sum_{n=-2}^{+\infty} \left(\frac{\mu(B(x,\frac{1}{2^n}R))}{V(x,\frac{1}{2^n}R)}\Psi(\frac{1}{2^n}R)\right)^{1/(p-1)}\ge \frac{1}{C_1}.$$
Let $C_2=\left(\frac{2^{\frac{3p}{p-1}}C_1}{2^{\frac{p}{p-1}}-1}\right)^{p-1}$, then there exists $n_x\ge-2$ such that
$$\frac{\mu(B(x,\frac{1}{2^{n_x}}R))}{V(x,\frac{1}{2^{n_x}}R)}\Psi(\frac{1}{2^{n_x}}R)\ge \frac{1}{C_22^{pn_x}}.$$
Otherwise, for any $n\ge-2$, we have
$$\frac{\mu(B(x,\frac{1}{2^n}R))}{V(x,\frac{1}{2^n}R)}\Psi(\frac{1}{2^n}R)<\frac{1}{C_22^{pn}},$$
which gives
$$\sum_{n=-2}^{+\infty} \left(\frac{\mu(B(x,\frac{1}{2^n}R))}{V(x,\frac{1}{2^n}R)}\Psi(\frac{1}{2^n}R)\right)^{1/(p-1)}<\sum_{n=-2}^{+\infty}\frac{1}{C_2^{\frac{1}{p-1}}2^{\frac{p}{p-1}n}}=\frac{1}{C_2^{\frac{1}{p-1}}}\frac{2^{\frac{3p}{p-1}}}{2^{\frac{p}{p-1}}-1}=\frac{1}{C_1},$$
contradiction. Hence
$$V(x,\frac{1}{2^{n_x}}R) \le C_22^{pn_x}\Psi(\frac{1}{2^{n_x}}R)\mu(B(x,\frac{1}{2^{n_x}}R)).$$

By \ref{eq_VD}, \hyperlink{eq_cap}{$\text{cap}(\Psi)$}, applying \cite[Proposition 2.1, Remark 2.2]{Yan25a}, there exists $C_3>0$ such that
$$\frac{\Psi(R)}{\Psi(\frac{1}{2^{n_x}}R)}\ge \frac{1}{C_3}\left(\frac{R}{\frac{1}{2^{n_x}}R}\right)^p=\frac{2^{pn_x}}{C_3},$$
hence
$$V(x,\frac{1}{2^{n_x}}R)\le C_2C_3\Psi(R)\mu(B(x,\frac{1}{2^{n_x}}R)).$$
By the $5B$-covering lemma (see \cite[Theorem 1.2]{Hei01}), there exists a countable family of disjoint balls $\{B(x_k,\frac{1}{2^{n_k}}R)\}_k$ such that
$$K_0\subseteq \bigcup_{k}B(x_k,5\cdot\frac{1}{2^{n_k}}R).$$
Therefore, by \ref{eq_VD},
\begin{align*}
&m(K)=m(K_0)\le \sum_k V(x_k,5\cdot\frac{1}{2^{n_k}}R)\le C_{VD}^3\sum_k V(x_k,\frac{1}{2^{n_k}}R)\\
&\le C_{VD}^3C_2C_3\Psi(R)\sum_{k}\mu(B(x_k,\frac{1}{2^{n_k}}R))=C\Psi(R)\mu \left(\bigcup_kB(x_k,\frac{1}{2^{n_k}}R)\right)\\
&\le C\Psi(R)\mu(B(x_0,16R))=C\Psi(R)\mathrm{cap}(K,X\backslash B(x_0,16R)),
\end{align*}
where $C=C_{VD}^3C_2C_3$, which gives (\ref{eq_mcap}).
\end{proof}

\bibliographystyle{plain}
%\bibliography{/Users/meng/Dropbox/myref}

\end{document}